\documentclass[12pt]{article}
\usepackage{amsmath}
\usepackage{tikz} % ??????
\usetikzlibrary{arrows.meta}
\usepackage{tikz}
\usepackage{amsmath}
\usetikzlibrary{arrows.meta, bending, calc}
\usepackage{amssymb,amsfonts,amsmath}
\usepackage{float}
\usepackage{booktabs}
 \usepackage{adjustbox}
\usepackage{array}
\usepackage{graphicx}
\usepackage{multirow}
\usepackage{color}
\def\blue{\textcolor{blue}}

\def\blue{\textcolor{blue}}
\usepackage{amsfonts, amsthm, amsmath}
\allowdisplaybreaks[4]
\usepackage{pgfplots}
\usepackage{xcolor}
\usepackage{afterpage}
\usepackage{rotating}

\usepackage{tikz}
\usepackage{lmodern}

\usetikzlibrary{decorations.pathmorphing}
\tikzset{
	redline/.style={red, thick, ->, shorten >=1pt, shorten <=1pt}
}

\usepackage{xcolor}
\usepackage{framed}
\usepackage{graphics}

\usepackage{amssymb}

\usepackage{amscd}

\usepackage{t1enc}

\usepackage[mathscr]{eucal}

\usepackage{indentfirst}

\usepackage{enumitem}

\usepackage{enumitem}

\usepackage{graphicx}

\usepackage{graphics}

\usepackage{pict2e}

\usepackage{mathrsfs}

\usepackage{comment}

\usepackage{enumitem}

\usepackage{enumerate}
\usepackage[pagebackref]{hyperref}
\hypersetup{colorlinks=true}%give color to the clickable link
\usepackage{color}
\usepackage{epic}
\usepackage{framed}
\usepackage{mathabx}
\usepackage{booktabs}
\usetikzlibrary{decorations.pathreplacing,calc}
\usepackage{makecell}
\newcolumntype{V}{!{\vrule width 2pt}}

\numberwithin{equation}{section}

\def\blue{\textcolor{blue}}

\def\blue{\textcolor{blue}}

\theoremstyle{plain}

\newtheorem{theorem}{Theorem}[section]

\newtheorem{conjecture}[theorem]{Conjecture}
\newtheorem{observation}[theorem]{Observation}

\newtheorem{lemma}[theorem]{Lemma}

\definecolor{handred}{RGB}{220, 0, 0}
\definecolor{handgreen}{RGB}{0, 150, 0}

\def\Rec{\mathsf{Rec}}
\def\des{\mathsf{des}}
\def\ear{\mathsf{ear}}
\def\cross{\mathsf{cross}}
\def\cyc{\mathsf{cyc}}
\def\fix{\mathsf{fix}}
\def\pex{\mathsf{pex}}
\def\pcyc{\mathsf{pcyc}}

\def\Ear{\mathsf{Ear}}

\def\exc{\mathsf{exc}}
\def\pdrop{\mathsf{pdrop}}

\def\Des{\mathsf{Des}}
\def\rmin{\mathsf{rmin}}
\def\nest{\mathsf{nest}}

\def\L{\mathcal{L}}

\def\Rmin{\mathsf{Rmin}}

\def\Pdrop{\mathsf{Pdrop}}

\begin{document}
	\begin{center}
		{\Large\bf On two conjectures concerning  special kinds of descents on permutations}
	\end{center}
	
	\begin{center}
		
		{\small Taifeng Ding, 
			Sherry H.F. Yan$^{*}$\footnote{$^*$Corresponding author. \\{\em E-mail address:}   hfy@zjnu.cn (S.H.F. Yan). }} 
		
		 Department of Mathematics,
		Zhejiang Normal University\\
		Jinhua 321004, P.R. China

	\end{center}

	\noindent {\bf Abstract.}
In this paper, we prove the continued fraction conjecture posed by Han, Mao and Zeng for the generating function of permutations with respect to the number of descents of type $2$ and the number of cycles, thereby settling their reformulation of a conjecture originally due to Baril and Kirgizov. We further establish the symmetry of the bistatistic $(\des_2, \ear)$ over $\mathfrak{S}_n$ as conjectured by Han, Mao and Zeng and strengthen this result by exhibiting  five equidistributed companions for $(\des_2, \ear)$. Here the statistic $\des_2$ denotes the number of descents of type $2$, and  the statistic $\ear$ denotes the number of exclusive antirecord cycle peaks originally introduced by Sokal and Zeng.

	\noindent {\bf Keywords}:  equidistribution, permutation statistic.

	\section{Introduction}
	Since MacMahon's pioneering work \cite{Mac}, permutation statistics have constituted a classical subject in enumerative combinatorics; see \cite{Huang-lin-yan jcta, Huang-lin-yan2, Huang-yan AAM, Liujcta, Liu1, Liu2} for recent developments.
	Let $\mathfrak{S}_n$ denote the set of permutations of $[n]=\{1,2,\ldots, n\}$.   Given a permutation $\pi=\pi_1\pi_2\ldots \pi_n$, an index $i$ ($1\leq i<n$) is said to be a {\em descent}
	if $\pi(i)>\pi(i+1)$ and an index $i$ ($1\leq i\leq n$) is said to be an {\em excedance} if $\pi(i)>i$.     Denote by $\des(\pi)$ (resp., $\exc(\pi)$) the number of descents (resp., excedences) of $\pi$, respectively.  
	It is a classical result in  enumerative combinatorics that the statistics $\des$ and $\exc$ are equally  distributed over $\mathfrak{S}_n$ (see~\cite[Chapter 1]{ST}) and their common  generating function is given by Eulerian polynomials $A_n(t)$, i.e.,  
	$$
	A_n(t)=\sum_{\pi \in \mathfrak{S}_n} t^{\des(\pi)}=\sum_{\pi \in \mathfrak{S}_n} t^{\exc(\pi)}.
	$$
	Following \cite[Chapter 1]{ST}, the {\em cycle notation}  of a permutation $\pi$ is obtained as follows. A sequence $(a_1, a_2, \ldots, a_k)$ is called
	a {\em cycle } of $\pi$ if $\pi(a_i)=a_{i+1}$ for all $1\leq i\leq k$ with the convention that 
	$a_{k+1}=a_1$.   Then   $\pi$ can be decomposed  into a disjoint union of cycles $C_1$, $C_2$, $\ldots, C_s$  and the {\em cycle notation} of $\pi$  is given by $\pi=C_1C_2\ldots C_s$. For example, the cycle notation  of $\pi=2461357$ is given by $\pi=(124)(365)(7)$.  In what follows,  each cycle is written with its smallest element first. A cycle of $\pi$ is said to be {\em pure } if it contains at least two elements.   Let $\cyc(\pi)$ and $\pcyc(\pi)$ denote  the number of cycles and the number of pure cycles in the cycle notation of $\pi$, respectively.
	
	 Given a permutation $\pi=\pi(1)\pi(2)\ldots \pi(n)\in \mathfrak{S}_n$, an index $i$ ($1\leq i\leq n$) is said to be 
	\begin{itemize}
		\item a {\em record (or left-to-right maximum)}  if either $i=1$ or $\pi(j)<\pi(i)$ for all $j<i$;
		\item an {\em antirecord (or right-to-left minimum)} if either $i=n$ or $\pi(j)>\pi(i)$  for all $j>i$. 
		\item a	{\em descent of type $2$} if  $i$ is both a descent  and  a record;
		\item a {\em pure excedence} if $i$ is an excedence and $\pi(j)\notin [i, \pi(i)]$ for all $j<i$, where $[a,b]=\{k\mid a\leq k\leq b\}$  for positive integers $a,b$;
		\item a {\em drop} if $\pi(i)<i$;
		\item a {\em pure drop} if $\pi(i)<i$ and $\pi(j)\notin [\pi(i),  i]$ for any $j>i$.
		\item a {\em fixed point} if $\pi(i)=i$.
		\item a {\em cycle peak} if $\pi(i)<i>\pi^{-1}(i)$;
		\item an {\em exclusive antirecord cycle peak }  if $i$ is a cycle peak and an antirecord but not a record. 
	\end{itemize}
	Let $\des_2(\pi)$ (resp., $\pex(\pi)$, $\pdrop(\pi)$, $\fix(\pi)$, $\ear(\pi)$) denote the number of descents of type $2$ (resp., pure excedances, pure drops, fixed points,  exclusive antirecord cycle peaks) of $\pi$.     For instance, if $\pi=6571423$,  then    $\des_2(\pi)= \pex(\pi)=\pdrop(\pi)=\ear(\pi)=2$ and $\fix(\pi)=0$.  The statistic $\ear$ was originally introduced by Sokal and Zeng \cite{sokal2022multivariate}.  In \cite{baril2021transformation}, Baril and Kirgizov introduced the statistics $\des_2$, $\pex$ and $\pcyc$, and proved bijectively that these statistics are equally distributed over $\mathfrak{S}_n$. They further concluded with two interesting 
	conjectures on the equidistribution of bistatistics. 
	
	\begin{conjecture}\label{con1}{\upshape(Baril and Kirgizov) }The two bistatistics $(\des_2, \cyc)$ and $(\pex, \cyc)$ are
		equidistributed on $\mathfrak{S}_n$. 
	\end{conjecture}
	
	\begin{conjecture}\label{con2}{\upshape(Vajnovszki) }The two bistatistics $(\des, \des_2)$ and $(\exc, \pex)$ are
		equidistributed on $\mathfrak{S}_n$. 
	\end{conjecture}
	It should be noted that Conjecture \ref{con2} has been resolved by Han, Mao and Zeng \cite{han2021equidistributions} through the
	combinatorial theory of $J$-continued fractions developed by Flajolet and Viennot in
	the 1980s \cite{flajolet1980combinatorial, francon1979permutations}.   In fact,  Han, Mao and Zeng proved that  the four bistatistics $(\exc, \pcyc)$, $(\exc, \ear)$, $(\des, \des_2)$ and $(\exc, \pex)$ are equidistributed over $\mathfrak{S}_n$ (see \cite[Corollary 1.6]{han2021equidistributions}).
	
	Define the polynomials $A_n(t,\lambda,y,w)$ by the $J$-fraction
	\begin{equation}\label{eq:1.2}
		\sum_{n\ge 0} A_n(t,\lambda,y,w)\, z^n
		=\cfrac{1}{
			1-\gamma_0z-\cfrac{\beta_1\,z^2}{
				1-\gamma_1z-\cfrac{\beta_2\,z^2}{
					1-\gamma_2z-\cdots
				}
			}
		},
	\end{equation}
	with
	\[
	\gamma_n = w + n(t+1),\qquad
	\beta_n = t(\lambda+n-1)(y+n-1).
	\]
	See \cite{sokal2022multivariate} for  generalizations of the polynomials $A_n(t,\lambda,y,w)$. In \cite{han2021equidistributions}, Han, Mao and Zeng investigated the combinatorial interpretations of the polynomials $A_n(t,\lambda,y,w)$ in terms of various permutation statistics.
	In particular, they   obtained five equidistributed companions of the bistatistic $(\pex,\cyc)$ in Conjecture \ref{con1} (see \cite[Theorem 1.7]{han2021equidistributions}), that is, 
	\begin{equation}\label{eq:han}
		\begin{aligned}
			A_n(1,\lambda,y,\lambda)
			&=\sum_{\sigma\in\mathfrak{S}_n} y^{\pex(\sigma)}\lambda^{\ear(\sigma)+\fix(\sigma)}
			=\sum_{\sigma\in\mathfrak{S}_n} y^{\ear(\sigma)}\lambda^{\pex(\sigma)+\fix(\sigma)} \\[4pt]
			&=\sum_{\sigma\in\mathfrak{S}_n} y^{\pcyc(\sigma)}\lambda^{\ear(\sigma)+\fix(\sigma)}
			=\sum_{\sigma\in\mathfrak{S}_n} y^{\ear(\sigma)}\lambda^{\cyc(\sigma)} \\[4pt]
			&=\sum_{\sigma\in\mathfrak{S}_n} y^{\pcyc(\sigma)}\lambda^{\pex(\sigma)+\fix(\sigma)}
			=\sum_{\sigma\in\mathfrak{S}_n} y^{\pex(\sigma)}\lambda^{\cyc(\sigma)}.  
		\end{aligned}
	\end{equation}

	By (\ref{eq:han}), Han, Mao and Zeng derived the following reformulation of Conjecture \ref{con1}.
	\begin{conjecture}{\upshape ( \cite{han2021equidistributions}, Conjecture 4.2)}
		\label{con:4.2}
		\[
		\sum_{n\geq 0}\sum\limits_{\sigma\in \mathfrak{S}_n}y^{\des_2(\sigma)}\lambda^{\cyc(\sigma)}z^{n}=\cfrac{1}{
			1-a_0x-\cfrac{b_1 z^2}{
				1-a_1x-\cfrac{b_2 z^2}{
					1-a_2x-\cfrac{b_3 z^2}{
						1-a_3x-\cdots
		}}}}
		\]
		with
		\[
		a_n = \lambda + 2n,\qquad
		b_n = (\lambda + n-1 )(y + n-1 ).
		\]
		
	\end{conjecture}

In \cite{han2021equidistributions}, Han, Mao, and Zeng observed that the distribution of the bistatistic $(\des_2, \pex)$ is not symmetric over $\mathfrak{S}_n$, and further conjectured that the distribution of the bistatistic $(\des_2, \ear)$ is symmetric over $\mathfrak{S}_n$.
	
	\begin{conjecture}{\upshape(\cite{han2021equidistributions}, Conjecture 4.1)}\label{con4.1}
		The distribution of $(\des_2, \ear)$ over permutations is symmetric, i.e., 
		$$
		\sum_{\pi\in \mathfrak{S}_n
		}q^{\des_2(\pi)}t^{\ear(\pi)}= \sum_{\pi\in \mathfrak{S}_n
		}q^{\ear(\pi)}t^{\des_2(\pi)}.
		$$
	\end{conjecture}
	The main objective of this paper is to  settle Conjectures \ref{con:4.2} and \ref{con4.1}.  In fact, we confirm and strengthen Conjecture   \ref{con4.1} by exhibiting  five  equidistributed companions of $(\des_2, \ear)$.

	\begin{theorem}\label{thm-equi}
		The six  bistatistics $(\des_2, \ear)$,  $(\ear, \des_2)$, $(\des_2, \pdrop)$, $(\pdrop, \des_2)$,  $(\des_2, \pcyc)$, $(\pcyc, \des_2)$ are
		equidistributed on $\mathfrak{S}_n$.    
	\end{theorem}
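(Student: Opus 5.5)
Our plan is to prove Theorem~\ref{thm-equi} by showing that all six bistatistics have one common generating polynomial, and we will get it from continued fractions. Write
\[
P_n(q,t)=\sum_{\pi\in\mathfrak{S}_n} q^{\des_2(\pi)}t^{\ear(\pi)} .
\]
We will show that $\sum_n P_n(q,t)z^n$ equals a $J$-fraction whose coefficients are \emph{symmetric} in $q$ and $t$. This gives $(\des_2,\ear)\sim(\ear,\des_2)$ at once.

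The natural candidate comes from \eqref{eq:1.2}. Sokal--Zeng's interpretation of $\ear$ and the Foata--Zeilberger/Biane-type bijections used by Han, Mao and Zeng suggest the following. For $(\ear,\pex)$ or $(\pdrop,\cdot)$ the weight of an up step at height $h$ should be $h+1$ when the step does not open an $\ear$-type event. The descent-type-2 statistic should behave analogously through a Françon--Viennot encoding. We therefore aim for
\[
\gamma_n = q+t+2n-\delta_{n0}\,(\cdots),\qquad \beta_n=(q+n-1)(t+n-1)
\]
at level $n\ge 1$, with the $n=0$ level adjusted to $\gamma_0=1$.

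Step 1: encode $\pi$ via the Françon--Viennot bijection to weighted Motzkin paths. We use the record-based variant, in which value $k$ is classified as a peak, valley, double ascent or double descent of the word $0\pi(1)\cdots\pi(n)0$. In this encoding, $i$ being a descent and a record corresponds to a peak value that exceeds all earlier ones. That happens exactly when the peak is inserted at the rightmost available slot, which is one particular choice among the $h$ choices at height $h$. This gives the factor $(q+n-1)$ in $\beta_n$, or the analogous one in $\gamma_n$.

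Step 2: encode $\pi$ via the Foata--Zeilberger bijection, where cycle peaks are down steps. An exclusive antirecord cycle peak corresponds to one distinguished choice among the down-step labels, which gives the factor $(t+n-1)$. Records must be excluded so that the choices are independent.

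Step 3 is the main obstacle. The two statistics come from different bijections, so a single encoding must track both at once. We will first try to express $\des_2$ inside the cycle/Foata--Zeilberger framework. Under the fundamental-transformation-type map, a left-to-right maximum that is a descent becomes an excedance record with the next value smaller. We then check that the corresponding choice is an up-step label independent of the down-step label carrying $\ear$. If that fails, we will instead build a direct bijection $\Phi$ on $\mathfrak{S}_n$ that swaps $\des_2$ and $\ear$. We would construct it via Laguerre histories by exchanging the roles of the up and down label choices, which is possible because $\beta_n$ factors as a product of the two labels.

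Step 4 handles the other four bistatistics. Once the $J$-fraction is in hand, we compute the analogous fractions for $(\des_2,\pdrop)$ and $(\des_2,\pcyc)$ using the same history, with the second label read as "pure drop" or "closes a cycle". The reverse-complement/inverse symmetry and \eqref{eq:han} show that $\pdrop$, $\ear$ and $\pcyc$ occupy the same label position as $\pex$ does in Han--Mao--Zeng's fractions. Each pair then yields the same symmetric $(q,t)$-fraction. Equality of the generating functions then gives all six equidistributions.
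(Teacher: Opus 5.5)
Your plan rests on a $J$-fraction formula that is false. For $n=3$ one checks that every non-identity permutation has $\des_2=\ear=1$, so $P_3(q,t)=1+5qt$. Your coefficients give
$\gamma_0^3+2\gamma_0\beta_1+\gamma_1\beta_1=1+2qt+(q+t+2)qt$.
At $q=t=1$ this equals $7$, so the fraction does not even count $\mathfrak{S}_3$.

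One step further, $P_4(q,t)=1+18qt+2q^2t+2qt^2+q^2t^2$. Here the permutations with $\des_2=2$ are $2143,3142,3241$, with $\ear=2,1,1$, and those with $\ear=2$ are $2143,3412,4312$. This forces $\gamma_1=3$ and $\beta_2=2(q+t)$, so no product form $(q+n-1)(t+n-1)$ is available.

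The obstruction is this. In the encoding where $\ear$ is a step label ($\Psi_{FZ}$, with $\ear$ the number of $(D,0)$ steps), $\des_2$ is not a step label. The records are the positions of $(U,0)$ and $(H_1,0)$ steps, and $\Des_2(\pi)=\{i\in\Rec(\pi): i+1\notin\Rec(\pi)\}$ depends on consecutive steps. So it cannot be absorbed into weights $\gamma_h,\beta_h$. Your Step 3 fallback is justified by the same false factorization of $\beta_n$. Step 4 appeals to \eqref{eq:han}, which involves $\pex,\cyc,\fix$ and says nothing about joint distributions with $\des_2$.

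The missing idea is to transport the record set itself rather than to count $\des_2$. Compare two encodings:
\begin{itemize}
\item Under $\Psi_{FZ}$, $\ear$ counts $(D,0)$ steps and $\Rec(\pi)$ is the set of $k$ with $(m_k,w_k)\in\{(U,0),(H_1,0)\}$.
\item Under the Fran\c{c}on--Viennot-type $\Phi_{FZ}^{-1}$, $\des_2$ counts $(D,0)$ steps and $\Rmin(\sigma)=\Rec(\sigma^{-1})$ is the set of $k$ with $(m_k,w_k)\in\{(U,h_k),(H_1,h_k)\}$.
\end{itemize}
Flipping $w_k\mapsto h_k-w_k$ on $U$ and $H_1$ steps gives a bijection $\sigma=\Phi(\pi)$ with $\ear(\pi)=\des_2(\sigma)$ and $\Rec(\pi)=\Rec(\sigma^{-1})$. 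Hence $\des_2(\pi)=\des_2(\sigma^{-1})$. Conjugating inversion, $\Phi^{-1}\circ\iota\circ\Phi$ is then an involution exchanging $\ear$ and $\des_2$.

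For the remaining pairs, two further maps fix every $(U,0),(H_1,0)$ position, hence $\Rec$ and $\Des_2$:
\begin{itemize}
\item Flipping $w_k\mapsto h_k-1-w_k$ on $D$ steps, using Corteel's identity $\nest_i+\cross_i=h_i-1$ for drops, turns $(D,0)$ steps into pure drops.
\item A modified inverse map $\Gamma$ makes $(D,0)$ mark the maxima of pure cycles.
\end{itemize}
This gives $(\des_2,\ear)\sim(\des_2,\pdrop)\sim(\des_2,\pcyc)$. The reversed pairs then follow from the symmetry of $(\des_2,\ear)$.
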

The rest of this paper is organized as follows. In Section 2, we prove Conjecture \ref{con:4.2} by transforming the bistatistic $(\des_2, \cyc)$ on permutations into certain bistatistic on weighted Motzkin paths. In Section 3, we construct an involution on $\mathfrak{S}_n$ that establishes the symmetry of the bistatistic $(\des_2, \ear)$. In Section 4, we strengthen Conjecture \ref{con4.1} by establishing the equidistribution of the bistatistics $(\des_2, \ear)$, $(\des_2, \pdrop)$, $(\des_2, \pcyc)$ over $\mathfrak{S}_n$.
    \section{Proof of Conjecture \ref{con:4.2}}
    This section is devoted to the proof of Conjecture \ref{con:4.2}. This is accomplished by transforming the bistatistic $(\des_2, \cyc)$ on permutations into certain bistatistic  on weighted Motzkin paths.

    Let $\mathcal{I}_n$ denote the set of sequences $(a_1, a_2, \ldots, a_n)$ with $1\leq a_i\leq i$ for all $1\leq i\leq n$.  There is a natrual  bijection $\theta$ between $\mathfrak{S}_n$ and $\mathcal{I}_n$ (see \cite[Chapter 1]{ST}). Given a permutation $\pi=\pi(1)\pi(2)\cdots\pi(n)\in \mathfrak{S}_n$, we obtain a sequence $\theta(\pi)=(a_1,a_2,\ldots,a_n)$, where for each $i$,
    \[
    a_i = \#\,\{\,j \mid j\leq i \text{ and } \pi^{-1}(j)\leq \pi^{-1}(i)\,\}.
    \]
     For example, if $\pi=3527164$, then $\theta(\pi)=(1,1, 1,4,2,5,4)$.
   
   Conversely, given an inversion sequence $s=(a_1, a_2, \ldots, a_n)$, we can obtain a permutation $\theta^{-1}(s)=\pi=\pi(1)\pi(2)\cdots\pi(n)\in \mathfrak{S}_n$ recursively as follows.
   Let $\pi^{(1)}=1$. Assume that $\pi^{(i-1)}$ has been determined. Generate a permutation $\pi^{(i)}$ from $\pi^{(i-1)}$ by inserting the letter $i$ immediately before the $a_i$-th letter of $\pi^{(i-1)}$ if $a_i < i$, and putting $i$ at the end of $\pi^{(i-1)}$ otherwise. Define $\theta^{-1}(s)=\pi^{(n)}$.

     In the following, we construct another bijection $\chi$ between $\mathcal{I}_n$ and $\mathfrak{S}_n$. Given a sequence $s=(a_1,a_2,\dots,a_n)\in \mathcal{I}_n$, we generate a sequence of permutations $\sigma^{(1)}, \sigma^{(2)}, \ldots,\sigma^{(n)}$ as follows. Set $\sigma^{(1)}=(1)$. Assume that $\sigma^{(i-1)}$ has already been determined. If $a_i = i$, define $\sigma^{(i)}$ to be the permutation whose cycle notation is obtained from that of $\sigma^{(i-1)}$ by inserting the cycle $(i)$. Otherwise,  define $\sigma^{(i)}$ to be the permutation whose cycle notation is  obtained from that of $\sigma^{(i-1)}$ by inserting the entry $i$ immediately after the entry $a_i$. Set  $\chi(s)=\sigma^{(n)}$. For example, if $s=(1,1,1,4,2,5, 4)$, then\begin{align*}
     	\chi(s)&=   (1)\rightarrow (12)\rightarrow (132) \rightarrow (132)(4)\rightarrow (1325)(4)\\
     	&\quad\rightarrow  (13256)(4) \rightarrow (13256)(47)\rightarrow 3527614
     \end{align*}
     
     \begin{lemma}\label{lem-chi}
     	For $n\geq 1$, the map $\chi$ is a bijection between $\mathcal{I}_n$ and $\mathfrak{S}_n$.
     	\end{lemma}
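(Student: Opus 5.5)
This is the standard cycle-insertion (Foata-type) argument. I would show that $\chi$ is well defined, that $|\mathcal{I}_n|=|\mathfrak{S}_n|=n!$, and that $\chi$ is injective, or equivalently that it has an explicit inverse. I would construct the inverse by induction on $n$, deleting the largest entry.

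\textbf{Well-definedness.} I would check by induction on $i$ that $\sigma^{(i)}$ is a permutation of $[i]$ in cycle notation. Suppose $\sigma^{(i-1)}\in\mathfrak{S}_{i-1}$.
\begin{itemize}
\item If $a_i=i$, appending the fixed point $(i)$ gives a permutation of $[i]$.
\item If $a_i<i$, then $a_i\in[i-1]$ occurs exactly once in the cycle notation of $\sigma^{(i-1)}$. Inserting $i$ right after $a_i$ in its cycle gives a permutation of $[i]$, namely $\sigma^{(i)}(a_i)=i$, $\sigma^{(i)}(i)=\sigma^{(i-1)}(a_i)$, and all other values unchanged.
\end{itemize}
Each cycle still has a smallest first element, since $i$ is the largest entry and is never placed first in an existing cycle. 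So the cycle-notation convention is preserved.

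\textbf{Inverse map.} Given $\sigma\in\mathfrak{S}_n$ with $n\ge 2$, I would recover $a_n$ and $\sigma^{(n-1)}$ as follows.
\begin{itemize}
\item If $n$ is a fixed point of $\sigma$, set $a_n=n$ and delete the cycle $(n)$.
\item Otherwise $n$ is not the first element of its cycle, since it is not the smallest element of a nontrivial cycle. Set $a_n=\sigma^{-1}(n)<n$, the entry immediately preceding $n$ in the cycle notation. Delete $n$ from its cycle, which defines $\sigma'(\sigma^{-1}(n))=\sigma(n)$.
\end{itemize}
In both cases $\sigma'\in\mathfrak{S}_{n-1}$ and $1\le a_n\le n$. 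The two operations, inserting $n$ after $a_n$ and removing $n$ while recording its predecessor, are mutually inverse one-step maps between $\mathfrak{S}_{n-1}\times[n]$ and $\mathfrak{S}_n$.

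\textbf{Conclusion.} Induction on $n$, with base case $\mathcal{I}_1=\{(1)\}$ and $\mathfrak{S}_1=\{(1)\}$, shows that $\chi$ is a bijection.

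The only point needing care is that the one-step map $\mathfrak{S}_{n-1}\times[n]\to\mathfrak{S}_n$ is a bijection. This relies on the predecessor of $n$ in the cycle notation being exactly $\sigma^{-1}(n)$, which holds because $n$ is never written first in a cycle. Everything else is routine bookkeeping.
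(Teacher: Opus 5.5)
Your proposal is correct and follows essentially the same route as the paper. The paper also builds the inverse $\chi'$ by repeatedly deleting the largest letter from the cycle notation, recording $a_i=i$ when $(i)$ is a fixed point and otherwise the letter immediately preceding $i$ (that is, $\sigma^{-1}(i)$); your version is slightly more explicit, since you check that the one-step insertion and deletion maps between $\mathfrak{S}_{n-1}\times[n]$ and $\mathfrak{S}_n$ are mutually inverse, which the paper only asserts.
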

     	\begin{proof}
     In order to show that the map $\chi$ is a bijection, we first give a description of a map $\chi'\colon \mathfrak{S}_n\rightarrow \mathcal{I}_n$. Given a permutation
     $\sigma\in \mathfrak{S}_n$, we recursively generate a sequence of pairs $(\sigma^{(n)}, a_n)$, $(\sigma^{(n-1)}, a_{n-1})$, $\ldots, (\sigma^{(1)}, a_1)$,  where each pair
     $(\sigma^{(i)}, a_i)$ is determined by the following rules.
     \begin{itemize}
     	\item $\sigma^{(n)}=\sigma$. 
     	\item For all $1\leq i<n$, the cycle notation of $\sigma^{(i)}$ is obtained from that of $\sigma^{(i+1)}$ by removing the letter $i+1$.
     	\item If the cycle notation of $\sigma^{(i)}$ contains the cycle $(i)$, let $a_i=i$. Otherwise,
     	let $a_i$ be the letter immediately before the letter $i$ in the cycle notation of $\sigma^{(i)}$.
     \end{itemize}
     Set $\chi'(\sigma)=(a_1, a_2, \ldots, a_n)$. Clearly, we have $\chi'(\sigma)\in \mathcal{I}_n$, and hence the map $\chi'$ is well-defined. Moreover, the maps $\chi$ and $\chi'$ are inverses of each other, and hence  the map $\chi$ is a bijection. This completes the proof.
     \end{proof}
     Let $\Des_2(\pi)$ denote the set of descents of type  $2$ of $\pi$ and let $\rmin(\pi)$ denote the number of right-to-left minima of $\pi$.  
     \begin{theorem}\label{thm-Des_2}
     The map $\delta:=\chi\circ\theta$ induces a bijection from $\mathfrak{S}_n$ onto itself such that for any permutation $\pi\in \mathfrak{S}_n$, we have
     	\begin{equation}\label{eq-Des_2}
     	(\Des_2, \rmin)\pi= (\Des_2, \cyc)\delta(\pi).
     		\end{equation}
     	\end{theorem}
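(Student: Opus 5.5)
The plan is to prove \eqref{eq-Des_2} by induction on $n$. We follow the two recursive constructions in parallel: $\theta^{-1}$ builds $\pi$ by inserting the letters $1,2,\ldots,n$ one at a time into the one-line word, and $\chi$ builds $\delta(\pi)$ by inserting the same letters into the cycle notation. Write $s=\theta(\pi)=(a_1,\ldots,a_n)$. Let $\pi^{(k)}$ be the word after inserting $k$, and $\sigma^{(k)}=\chi(a_1,\ldots,a_k)$. Since $\delta$ is a composition of bijections (Lemma \ref{lem-chi} and the classical bijection $\theta$), it is a bijection. It therefore suffices to show that, for every $k$, both statistics change in the same way in passing from step $k-1$ to step $k$.

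\textbf{Step 1: $\rmin$ versus $\cyc$.} A value $v$ contributes to $\rmin(\pi)$ exactly when every letter smaller than $v$ lies to its left in $\pi$. This holds exactly when $a_v=\#\{j\le v:\pi^{-1}(j)\le\pi^{-1}(v)\}=v$. Hence $\rmin(\pi)=\#\{v: a_v=v\}$. On the other side, $\chi$ creates a new cycle precisely at the steps with $a_v=v$, and never removes a cycle. So $\cyc(\chi(s))=\#\{v:a_v=v\}$ as well, which gives the second coordinate.

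\textbf{Step 2: recursion for $\Des_2$ of $\pi$.} Suppose first $a_k=k$. Then $k$ is appended at the end of $\pi^{(k-1)}$; the last index is never a descent and earlier indices keep their letters and neighbours, so $\Des_2$ is unchanged. Suppose next $a_k=p<k$. Then $k$ is placed at position $p$. Indices $j\le p-2$ keep their values and neighbours, and their record status is unaffected because $k$ lies to their right. Index $p-1$ now precedes $k$, so it is not a descent. Index $p$ carries the maximum letter and is followed by a smaller letter, so $p\in\Des_2$. Every index $j>p$ lies to the right of $k$, so it is not a record. Therefore
\[
\Des_2(\pi^{(k)})=\bigl(\Des_2(\pi^{(k-1)})\cap[1,p-2]\bigr)\cup\{p\}.
\]

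\textbf{Step 3: the same recursion for $\sigma$.} If $a_k=k$, we add the fixed point $\sigma(k)=k$ at the last index; nothing else changes, so $\Des_2$ is unchanged. If $a_k=p<k$, inserting $k$ after $p$ in its cycle sets $\sigma^{(k)}(p)=k$ and $\sigma^{(k)}(k)=\sigma^{(k-1)}(p)$. All other values $\sigma(j)$ with $j\ne p,k$ are unchanged. We then check the same four facts.
\begin{itemize}
\item For $j\le p-2$ the values $\sigma(j),\sigma(j+1)$ and all earlier values are unchanged, so membership in $\Des_2$ is unchanged.
\item Index $p-1$ is not a descent, since $\sigma(p)=k$ is the maximum.
\item Index $p$ is a record with value $k$. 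It is a descent because $p+1\le k$ and $\sigma(p+1)\neq k$.
\item No index $j>p$ is a record, since the larger value $k$ occurs earlier; the new last index $k$ is not a descent anyway.
\end{itemize}
Thus $\Des_2$ obeys exactly the same recursion as in Step 2, and induction from the common base case $\pi^{(1)}=\sigma^{(1)}=1$ finishes the proof.

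The main obstacle is Step 3: verifying that inserting $k$ into a cycle alters the one-line word only at positions $p$ and $k$. It also requires checking carefully the edge effects at indices $p-1$ and $p$, including the cases $p=1$ and $p=k-1$.
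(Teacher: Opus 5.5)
Your proof is correct and follows essentially the paper's route: run the insertion procedures for $\theta^{-1}$ and $\chi$ in parallel, and verify that $\Des_2$ obeys the same update rule on both sides while $\rmin$ and $\cyc$ each grow by one exactly when $a_k=k$ (you get the latter directly from $\rmin(\pi)=\#\{v : a_v=v\}$, whereas the paper does it inside the induction). Your update rule $\bigl(\Des_2\cap[1,p-2]\bigr)\cup\{p\}$ is in fact the accurate one; the paper's displayed rule keeps every old element $k<a_i$, overlooking that index $a_i-1$ stops being a descent once the maximum sits at position $a_i$, but since the same rule holds for $\pi$ and $\sigma$ the conclusion is unaffected.
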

     \begin{proof}
    Since both   $\theta$ and $\chi$ are bijections, the map $\delta$   is a bijection from   $\mathfrak{S}_n$ onto itself.  Now we proceed to  show that \eqref{eq-Des_2} holds for any $\pi\in \mathfrak{S}_n$. Let $\pi=\pi(1)\pi(2)\cdots\pi(n)$, $\theta(\pi)=s=(a_1,a_2,\ldots,a_n)$ and $\chi(s)=\sigma=\sigma(1)\sigma(2)\cdots\sigma(n)$. When we apply the map $\theta^{-1}$ to $s=(a_1,a_2,\ldots,a_n)$, we generate a sequence of permutations $\pi^{(1)},\pi^{(2)},\ldots,\pi^{(n)}$ such that $\pi^{(n)}=\pi$. Recall that $\pi^{(1)}=1$, and for all $i>1$, the permutation $\pi^{(i)}$ is obtained from $\pi^{(i-1)}$ by inserting the letter $i$ immediately before the $a_i$-th letter of $\pi^{(i-1)}$ if $a_i<i$, and putting the letter $i$ at the end of $\pi^{(i-1)}$ otherwise.
     
     Similarly, when applying the map $\chi$ to $s$, we also obtain a sequence of permutations $\sigma^{(1)},\ldots,\sigma^{(n)}$, where $\sigma^{(1)}=1$ and $\sigma^{(n)}=\sigma$. For  each  $i>1$,   $\sigma^{(i)}$ is determined by the following rules.
     \begin{itemize}
     	\item If $a_i = i$, define   $\sigma^{(i)}$ to be the permutation  whose cycle notaion is obtained from that  of $\sigma^{(i-1)}$ by inserting the cycle $(i)$.
     	\item Otherwise, define   $\sigma^{(i)}$ to be the permutation  whose cycle notaion is obtained from that of   $\sigma^{(i-1)}$ by  inserting  the entry $i$ immediately after the entry $a_i$. 
     \end{itemize}
     
     In what follows, we aim to show that
     \[
     (\Des_2, \rmin)\pi^{(i)}= (\Des_2, \cyc)\sigma^{(i)}
     \]
     holds for all $1\leq i\leq n$. We proceed by induction on $i$. Clearly, the assertion holds for $i=1$ since $\pi^{(1)}=\sigma^{(1)}=1$. Now assume that $i>1$, and suppose that
     \[
     (\Des_2, \rmin)\pi^{(i-1)}= (\Des_2, \cyc)\sigma^{(i-1)}.
     \]
     
     By the construction of $\pi^{(i)}$, it is easily seen that
     \[
     \Des_2(\pi^{(i)})= \{k\mid  k\in \Des_2(\pi^{(i-1)}) \;\; \mbox{and}\;\; k<a_i\}  \cup\{a_i\},\quad \rmin(\pi^{(i)})=\rmin(\pi^{(i-1)})
     \]
     when $a_i<i$, and
     \[
     \Des_2(\pi^{(i)})= \Des_2(\pi^{(i-1)}),\quad \rmin(\pi^{(i)})=\rmin(\pi^{(i-1)})+1
     \]
     otherwise.
     
     Similarly, by the construction of $\sigma^{(i)}$, one can easily check that if $a_i=i$, then $\sigma^{(i)}(j)=\sigma^{(i-1)}(j)$ for all $j<i$ and $\sigma^{(i)}(i)=i$. Moreover, if $a_i<i$, then $\sigma^{(i)}(j)=\sigma^{(i-1)}(j)$ for all $j<i$ and $j\neq a_i$, $\sigma^{(i)}(a_i)=i$ and $\sigma^{(i)}(i)=\sigma^{(i-1)}(a_i)$. Therefore, we deduce that
     \[
     \Des_2(\sigma^{(i)})= \{k\mid  k\in \Des_2(\sigma^{(i-1)})\;\; \mbox{and}\;\; k<a_i \}\cup\{a_i\},\quad \cyc(\sigma^{(i)})=\cyc(\sigma^{(i-1)})
     \]
     when $a_i<i$, and
     \[
     \Des_2(\sigma^{(i)})= \Des_2(\sigma^{(i-1)}),\quad \cyc(\sigma^{(i)})=\cyc(\sigma^{(i-1)})+1
     \]
     otherwise.
     
     Then, by the induction hypothesis, we conclude that
     \[
     \Des_2(\pi^{(i)})=\Des_2(\sigma^{(i)})\quad \mbox{and}\quad \rmin(\pi^{(i)})=\cyc(\sigma^{(i)})
     \]
     for all $1\leq i\leq n$. Hence, we deduce that
     \[
     (\Des_2, \rmin)\pi=  (\Des_2, \rmin)\pi^{(n)}= (\Des_2, \cyc)\sigma^{(n)}=(\Des_2, \cyc)\sigma
     \]
     as desired, completing the proof. 
     	\end{proof}

      Recall that a  {\em  Motzkin path} of length $n$  is a lattice path in the quarter plane $\mathbb{N}^2$ starting from  $(0,0)$ to $(n,0)$   using three  possible steps:
      $$
      \text{$(1,1)=U$ (up step), $(1,-1)=D$ (down step)\,\,\,and\,\,\,$(1,0)=H$ (horizontal step)  } 
      $$
      and never lying below the $x$-axis. 
      A $2$-Motzkin path is a Motzkin path in which the horizontal steps can be colored by red and  blue. For our convenience, we use $H_1$ (resp., $H_2$) to denote the red (resp., blue) horizontal step.  Let $\mathcal{M}^2_n$  denote the set of  $2$-Motzkin paths of length $n$. 
      For a   Motzkin path $P=p_1p_2\cdots p_n$ with $n$ steps, let $h_i(P)$ be the {\em height} of the $i$-th step of $P$:
      $$
      h_i(P):=|\{j\mid j<i, p_j=U\}|-|\{j\mid j<i, p_j=D\}|.
      $$
      A {\em restricted Laguerre history} of length $n$ is a pair $(M,w)$, where $M=m_1m_2\cdots m_n\in\mathcal{M}^2_n$ and $w=w_1w_2\cdots w_n\in \mathbb{N}^n$ is a weight function satisfying
      $$
      0\leq w_i\leq \begin{cases}
      	h_i(M),\quad&\text{if $m_i=U, H_1$};\\
      	h_i(M)-1, \quad&\text{if $m_i=D, H_2$}.
      \end{cases}
      $$
      Let $\mathcal{L}_n$ denote the set of all restricted Laguerre histories of length $n$. In the following, we   recall
      the classical Foata--Zeilberger bijection $\Phi_{FZ}$~\cite{FZ}, which is a variant of the Fran\c con- Viennot bijection \cite{francon1979permutations}. 
      Given a permutation $\pi\in \mathfrak{S}_n$, we use the convention
      $\pi(0)=0$ and $\pi(n+1)=+\infty$. For each $i\in[n]$,  define 
      $$(\underline{31}2)_i(\pi):=|\{k: k<j\text{ and } \pi(k)<\pi(j)=i<\pi(k-1)\}|.$$
      Define $\Phi_{FZ}(\pi)=(M,w)$, where for  $i\in[n]$ with $\pi(j)=i$:
      $$
      m_i=\left\{
      \begin{array}{ll}
      	U&\quad\mbox{if $\pi(j-1)>\pi(j)<\pi(j+1)$},  \\
      	D&\quad\mbox{if $\pi(j-1)<\pi(j)>\pi(j+1)$},  \\
      	H_1&\quad\mbox{if $\pi(j-1)<\pi(j)<\pi(j+1)$},\\
      	H_2&\quad\mbox{if $\pi(j-1)>\pi(j)>\pi(j+1)$},
      \end{array}
      \right.
      $$
      and $w_i=(\underline{31}2)_i(\pi)$.
       For example,  if $\pi=6715423\in\mathfrak{S}_7$, then
       $$\Phi_{FZ}(\pi)=(UUH_1H_2DH_1D,  0121100).$$
      
      The inverse algorithm $\Phi_{FZ}^{-1}$ constructing a permutation $\pi$ (in $n$ steps) from a Laguerre history $(M,w)\in\L_{n}$ can be described iteratively as:
      \begin{itemize}
      	\item Initialization: $\pi=\diamond$;
      	\item At the $i$-th ($1\leq i\leq n$) step of the algorithm, replace the $(w_i+1)$-th $\diamond$ (from left to right) of $\pi$ by
      	$$
      	\begin{cases}
      		\,\diamond i\diamond&\quad \text{if $m_i=U$},\\
      		\, i\diamond&\quad \text{if $m_i=H_1$},\\
      		\, i& \quad\text{if $m_i=D$},\\
      		\, \diamond i&\quad \text{if $m_i=H_2$};
      	\end{cases}
      	$$
      	\item The final permutation is obtained by removing  the last remaining $\diamond$.
      \end{itemize}
      
      For example,  if $(M, w)=(UUH_1H_2DH_1D,  0121100)$, then \begin{align*}
      	\pi&=\diamond\rightarrow \diamond1\diamond\rightarrow\diamond1\diamond2\diamond\rightarrow\diamond1\diamond23\diamond \rightarrow  \diamond1\diamond423\diamond\\
      	&\quad\rightarrow \diamond15423\diamond\rightarrow 6\diamond15423\diamond\rightarrow
  6715423\diamond\rightarrow 6715423.
      \end{align*}
      
      Given a permutation $\pi\in \mathfrak{S}_n$, define $$\Rmin(\pi)=\{\pi(k)\mid  \mbox{either $k=n$ or}\,\,  \pi(j)>\pi(k) \,\, \mbox{for all $j>k$}\}.$$ 
       
      The following property  follows directly from the definition of   $\Phi^{-1}_{FZ}$. 
      \begin{lemma}\label{lem-PhiFZ}
      Given a  Laguerre history $(M,w)\in\L_{n}$ with $M=m_1m_2\ldots m_n$ and $w=w_1w_2\ldots w_n$, let  $\pi=\Phi^{-1}_{FZ}(M,w)$. Then  we have
      	\begin{equation}\label{eq-des_2-Mot}
      	 \des_2(\pi)=|\{k\mid (m_k, w_k)=(D, 0)\}| 
      	 \end{equation}
      	and 
      	\begin{equation}\label{eq-Rec-Mot}
     \Rmin(\pi)=
      	\{k\mid (m_k, w_k)=(H_1, h_k(M))\}\cup \{k\mid (m_k, w_k)=(U, h_k(M))\}.
      	\end{equation}
      	\end{lemma}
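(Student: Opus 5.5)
The plan is to read both identities off the insertion algorithm for $\Phi_{FZ}^{-1}$. We follow the partial words produced step by step and record where each letter $k$ is placed relative to the $\diamond$'s. Two invariants drive everything.

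First, before step $k$ the current word contains exactly $h_k(M)+1$ copies of $\diamond$. This holds by induction on $k$: initially there is one $\diamond$ and $h_1=0$; a $U$ step adds one $\diamond$, a $D$ step removes one, and $H_1,H_2$ steps keep the count. This also shows the constraint on $w_k$ makes sense: the chosen position $w_k+1$ is at most $h_k+1$.

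Second, the last symbol of the current word is always a $\diamond$ until the final removal. Only $D$ and $H_2$ steps can destroy a final $\diamond$, since only their replacements do not end in $\diamond$. For those steps $w_k\le h_k-1$, so they never act on the last $\diamond$, whose position is $h_k+1$.

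Finally, every letter inserted after step $k$ is larger than $k$ and occupies a position formerly held by a $\diamond$ present at step $k$ or created later inside such a slot. Hence the letters larger than $k$ in the final word sit exactly at the locations of the $\diamond$'s present just after step $k$. The letters smaller than $k$ are exactly the letters already present.

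For \eqref{eq-des_2-Mot}, let $\pi(j)=k$. Then $j$ is a record iff no letter larger than $k$ lies to the left of $k$. By the remark above, this happens iff, just after step $k$, no $\diamond$ lies to the left of $k$. That means $k$ replaced the first $\diamond$, i.e.\ $w_k=0$, and the replacement pattern puts no $\diamond$ before $k$, i.e.\ $m_k\in\{D,H_1\}$. On the other hand, $j$ is a descent iff $\pi(j)>\pi(j+1)$. By the very definition of the step types in $\Phi_{FZ}$, with the convention $\pi(n+1)=+\infty$, this happens iff $m_k\in\{D,H_2\}$. Intersecting the two conditions gives $(m_k,w_k)=(D,0)$, which proves the first identity.

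For \eqref{eq-Rec-Mot}, the letter $k$ belongs to $\Rmin(\pi)$ iff no letter smaller than $k$ lies to its right, i.e.\ no letter already present at step $k$ lies to its right. Suppose $k$ replaces the last $\diamond$, so $w_k=h_k$. Then by the second invariant all existing letters are to its left, and $k\in\Rmin(\pi)$. If instead $k$ replaces an earlier $\diamond$, then the last $\diamond$ remains to its right. Taking the letters between that $\diamond$ and $k$ into account, there must be an already-placed letter to the right of $k$: the $\diamond$'s are separated by existing letters, since two $\diamond$'s are never adjacent in any replacement pattern or in the accumulated word. So in that case $k\notin\Rmin(\pi)$.

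The condition $w_k=h_k$ is only admissible when $m_k\in\{U,H_1\}$, which yields the stated set. The corner case where $k$ ends up last in $\pi$ is consistent with this, because the final $\diamond$ is deleted.

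The only delicate point is the claim that consecutive $\diamond$'s are always separated by letters. I would verify it as a third inductive invariant, checking it against each of the four replacement patterns.
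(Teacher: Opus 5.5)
Your proposal is correct and follows the paper's route: the paper only remarks that the lemma ``follows directly from the definition of $\Phi^{-1}_{FZ}$'', and your three invariants (the $h_k(M)+1$ copies of $\diamond$, the trailing $\diamond$, and no two adjacent $\diamond$'s) supply exactly the details it omits. One step should be stated explicitly. Every non-trailing $\diamond$ present after step $k$ is eventually replaced, because $M$ returns to height $0$ and so only the trailing $\diamond$ survives; this is what guarantees that a $\diamond$ to the left of $k$ forces a larger letter to its left.
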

      	 
      	 Given a restricted 
      	   Laguerre history $(M,w)\in\L_{n}$ with $M=m_1m_2\ldots m_n$ and $w=w_1w_2\ldots w_n$,  define
      	   $$
      	\alpha(M,w)=\#\,\{k\mid (m_k, w_k)=(D, 0)\},
      	   $$
      	   and 
      	  $$
      	  \beta(M,w)=  \#\,\{k\mid (m_k, w_k)=(U, h_k(M))\}+\#\,\{k\mid (m_k, w_k)=(H_1, h_k(M))\}|.
      	   $$

      	   The following property of $\Phi^{-1}_{FZ}$ follows directly from Lemma \ref{lem-PhiFZ}.
      	   \begin{theorem}\label{thm-PhiFZ}
      	The map $\Phi^{-1}_{FZ}$ is a bijection from $\mathfrak{S}_n$ onto itself	 such that for  any  Laguerre history $(M,w)\in\L_{n}$,  we have
      	$\pi=\Phi^{-1}_{FZ}(M,w)$ satisfying 
      		\begin{equation}\label{eq-des_2-Mot}
      			\des_2(\pi)= \alpha(M,w)\,\, \, \mbox{and}\,\,\, \rmin(\pi)=\beta(M,w).
      		\end{equation}
      			\end{theorem}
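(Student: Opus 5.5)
The plan is to observe that Theorem~\ref{thm-PhiFZ} needs nothing beyond Lemma~\ref{lem-PhiFZ} and the fact that $\Phi_{FZ}$ is a bijection. (The statement's phrase ``from $\mathfrak{S}_n$ onto itself'' should be read as ``from $\L_n$ onto $\mathfrak{S}_n$''.) I will take bijectivity of the Foata--Zeilberger map $\Phi_{FZ}\colon \mathfrak{S}_n\to\L_n$ from \cite{FZ}. If a self-contained check is wanted, it goes as follows. The insertion algorithm for $\Phi_{FZ}^{-1}$ always has $h_i(M)+1$ diamonds available before step $i$. The weight constraints are exactly what make each choice of slot legal: for $U$ and $H_1$ there are $h_i+1$ slots, while for $D$ and $H_2$ the last diamond is excluded, giving $h_i$ slots. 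Reading off $m_i$ and $w_i=(\underline{31}2)_i$ from the result recovers the history.

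The two equalities in \eqref{eq-des_2-Mot} then follow from Lemma~\ref{lem-PhiFZ} by counting. The first part of the lemma gives $\des_2(\pi)=\#\{k\mid (m_k,w_k)=(D,0)\}$, which is $\alpha(M,w)$ by definition. The second part describes $\Rmin(\pi)$ as the disjoint union of the sets of $k$ with $(m_k,w_k)=(H_1,h_k(M))$ and with $(m_k,w_k)=(U,h_k(M))$. The union is disjoint because a step cannot be both $U$ and $H_1$. Each right-to-left minimum position contributes exactly one value to $\Rmin(\pi)$, so $\rmin(\pi)=|\Rmin(\pi)|=\beta(M,w)$.

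If Lemma~\ref{lem-PhiFZ} is to be justified rather than assumed, that is the only real work. For the first part, the plan is to note that a type-$2$ descent top is a record value $i$ followed by a smaller letter. In the insertion process such an $i$ is placed into the leftmost diamond, so $w_i=0$, and it closes both sides, so $m_i=D$, since every later value is larger. For the second part, a right-to-left minimum value $i$ must be inserted into the rightmost diamond, $w_i=h_i$, and it must leave a diamond on its right ($U$ or $H_1$). Conversely, such insertions stay right-to-left minima because all later letters are larger and are placed in diamonds. I expect this diamond-position bookkeeping in the converse direction to be the main obstacle, but it is a routine induction on the insertion steps.
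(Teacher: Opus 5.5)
Your proposal is correct and follows the paper's approach: the paper also derives Theorem~\ref{thm-PhiFZ} directly from Lemma~\ref{lem-PhiFZ}, since $\alpha$ and $\beta$ are the cardinalities of the sets in that lemma. It likewise takes bijectivity of the Foata--Zeilberger map from \cite{FZ}, and you are right that ``from $\mathfrak{S}_n$ onto itself'' is a typo for ``from $\L_n$ onto $\mathfrak{S}_n$''. Your sketches of the diamond-count argument and of Lemma~\ref{lem-PhiFZ} go beyond the paper, which only says the lemma follows from the definition of $\Phi^{-1}_{FZ}$. They are sound, but for the first part of the lemma you should also state the converse: $(m_i,w_i)=(D,0)$ means $i$ fills the leftmost diamond and leaves no diamond on either side, so every letter to its left and its right neighbour are smaller, making $i$ a type-$2$ descent top.
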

      
       Given a nonnegative integer $k$, a  restricted Laguerre history    $(M,w)$ with $M=m_1m_2\cdots m_n\in\mathcal{M}^2_n$ and $w=w_1w_2\cdots w_n\in \mathbb{N}^n$ is said to be of {\em rank $k$ }  if it satisfies  
      $$
      0\leq w_i\leq \begin{cases}
      	h_i(M)+k,\quad&\text{if $m_i=U, H_1$};\\
      	h_i(M)+k-1, \quad&\text{if $m_i=D, H_2$}.
      \end{cases}
      $$
      Clearly,   An ordinary   restricted Laguerre history    $(M,w)$   has rank $0$.    Let $\mathcal{L}_n^k$ denote the set of restricted Laguerre histories  $(M,w)$ of rank $k$, where $M\in \mathcal{M}^2_n$.

     \begin{lemma}\label{lem-gen}
     	\begin{equation}\label{eq-con}
     	\sum_{n\geq 0}\sum_{(M, w)\in \mathcal{L}_n} z^ny^{\alpha(M,w)}\lambda^{\beta(M,w)}=\cfrac{1}{
     		1-a_0x-\cfrac{b_1z^2}{
     			1-a_1x-\cfrac{b_2 z^2}{
     				1-a_2x-\cfrac{b_3 z^2}{
     					1-a_3x-\cdots
     	}}}}
     	\end{equation}
     	with
     	\[
     a_n = \lambda + 2n,\qquad
    b_n = (\lambda + n-1 )(y + n -1).
     	\]
     	\end{lemma}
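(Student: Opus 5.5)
The plan is to apply Flajolet's combinatorial theory of $J$-continued fractions \cite{flajolet1980combinatorial}. That theorem says that if every up step starting at height $h$ carries weight $u_h$, every down step starting at height $h$ carries weight $d_h$, and every horizontal step at height $h$ carries weight $c_h$, then the generating function $\sum_P \mathrm{wt}(P) z^{|P|}$ over all Motzkin paths is the $J$-fraction with $\gamma_h=c_h$ and $\beta_h=u_{h-1}d_h$. I read the variable $x$ in \eqref{eq-con} as $z$, which is evidently a misprint. The statistics $\alpha$ and $\beta$ are sums of indicators. Each indicator depends only on the triple $(m_k,w_k,h_k(M))$, so $z^n y^{\alpha(M,w)}\lambda^{\beta(M,w)}$ factors as a product over the steps. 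Summing over all admissible weight sequences $w$ for a fixed $2$-Motzkin path $M$ therefore gives a product of per-step sums. This reduces the left-hand side of \eqref{eq-con} to a sum over $2$-Motzkin paths with height-dependent step weights.

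The next step is to compute these per-step sums from the restrictions defining $\mathcal{L}_n$. Consider a step at height $h$.
\begin{itemize}
\item An up step $U$ allows $w\in\{0,\dots,h\}$. The value $w=h$ contributes $\lambda$ (it is counted by $\beta$) and the other $h$ values contribute $1$, so $u_h=\lambda+h$.
\item A red horizontal step $H_1$ also allows $w\in\{0,\dots,h\}$ with the top value weighted $\lambda$, so it contributes $\lambda+h$.
\item A blue horizontal step $H_2$ allows $w\in\{0,\dots,h-1\}$ with no statistic attached, so it contributes $h$.
\item A down step $D$ allows $w\in\{0,\dots,h-1\}$. The value $w=0$ is counted by $\alpha$ and contributes $y$, and the remaining $h-1$ values contribute $1$, so $d_h=y+h-1$.
\end{itemize}
Combining the two colours of horizontal step gives $c_h=(\lambda+h)+h=\lambda+2h$.

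Flajolet's theorem then yields $\gamma_n=\lambda+2n=a_n$ and $\beta_n=u_{n-1}d_n=(\lambda+n-1)(y+n-1)=b_n$. This is exactly the claimed continued fraction.

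The only point needing care is the boundary behaviour, and I expect it to be the main thing to verify rather than a genuine obstacle. The $D$ and $H_2$ steps have weight ranges of size $h$, so they are automatically forbidden at height $0$: this matches $c_0=\lambda$, to which only $H_1$ with $w=0$ contributes. One must also check that the indicator conditions $w=h$ and $w=0$ always fall inside the allowed ranges. For $D$ at height $h\ge 1$ the value $0$ is allowed, and for $U$ and $H_1$ the value $w=h$ is allowed. It also remains to confirm that the correspondence between Laguerre histories and weighted paths is the obvious bijection, so that no overcounting occurs. This lemma does not need the rank-$k$ histories $\mathcal{L}_n^k$. Combining it with Theorems \ref{thm-Des_2} and \ref{thm-PhiFZ} then gives Conjecture \ref{con:4.2}.
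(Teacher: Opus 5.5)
Your proof is correct, and it reaches the result by a different packaging of the same computation. The paper does not invoke Flajolet's theorem. Instead it uses the first-return decomposition $M=H_1M'$, $M=H_2M'$, $M=UM''DM'$. To make that decomposition self-similar, it introduces rank-$k$ Laguerre histories $\mathcal{L}_n^k$, whose weight bounds are shifted by $k$, together with the shifted statistic $\beta^k$. The lifted subpath $M''$ then becomes a rank-$(k+1)$ history, which yields
\[
F_k=\frac{1}{1-(\lambda+2k)z-(\lambda+k)(y+k)z^2F_{k+1}}.
\]
The $z^2$ is missing in the paper's displayed recurrence, a misprint. Iterating from $k=0$ gives the $J$-fraction.

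You instead observe that the weights factor step by step and read off $u_h=\lambda+h$, $c_h=\lambda+2h$ and $d_h=y+h-1$, then apply the general theorem. The paper's factors $\lambda+2k$ and $(\lambda+k)(y+k)$ are exactly your $c_k$ and $u_kd_{k+1}$.

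Your route is shorter and needs no rank-$k$ objects. It also shows explicitly where each factor comes from, whereas the paper simply asserts its recurrence ``from the decomposition''. The paper's route is self-contained, since it effectively reproves the needed case of Flajolet's theorem. Its rank-shift device is what makes a direct first-return argument work without citing the theorem.
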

     	\begin{proof}
     		Given a  nonempty restricted Laguerre history    $(M,w)$ of rank $k$,  the path $M$ can be uniquely decomposed as   one of the
     		following three forms:
     		\[
     		\text{(i)}\quad  M=H_1M'
     		\qquad
     		\text{(ii)}\quad M=H_2M'
     		\qquad
     		\text{(iii)}\quad M=UM''DM',
     		\]
     		where $M'$ and $M''$  are  (possibly empty)  $2$-Motzkin paths.   
     		Given a restricted 
     		Laguerre history $(M,w)\in\L^k_{n}$ with $M=m_1m_2\ldots m_n$ and $w=w_1w_2\ldots w_n$,  define
     		$$
     		\beta^k(M,w)=  \#\,\{k\mid (m_k, w_k)=(U, h_k(M)+k)\}+\#\,\{k\mid (m_k, w_k)=(H_1, h_k(M)+k)\}|.
     		$$
     		Clearly, we have $\beta^{0}(M,w)=\beta(M,w)$.
     		
     		Define
     		$$
     		F_k(z;y,\lambda)=\sum_{n\geq 0}\sum_{(M, w)\in \mathcal{L}^k_n} z^ny^{\alpha(M,w)}\lambda^{\beta^k(M,w)}.
     		$$
     	From the decomposition of $M$, it follows that 
     	$$
     	F_k(z;y, \lambda)=1+(\lambda+2k)zF_{k}(z;y,\lambda)+(\lambda+k)(y+k)F_{k+1}(z;y,\lambda)F_{k}(z;y, \lambda)
     	$$
     	for all $k\geq 0$. 
     	This yields
     	\[
     	F_k(z;y,\lambda)=\frac{1}{1-(\lambda+2k)z-(\lambda+k)(y+k)F_{k+1}(z;y,\lambda)},
     	\]
     	which implies (\ref{eq-con}), completing the proof. 
     		\end{proof}
     		
     		\noindent{\bf Proof of Conjecture \ref{con:4.2}.}  In view of Theorems \ref{thm-Des_2} and  \ref{thm-PhiFZ} and Lemma \ref{lem-gen}, we deduce that
     			\[
     			\begin{array}{lll}
     		\sum\limits_{n\geq 0}\sum\limits_{\sigma\in \mathfrak{S}_n}y^{\des_2(\sigma)}\lambda^{\cyc(\sigma)}z^{n}&=&\sum\limits_{n\geq 0}\sum\limits_{\sigma\in \mathfrak{S}_n}y^{\des_2(\sigma)}\lambda^{\rmin(\sigma)}z^{n}\\
     		&=&	\sum\limits_{n\geq 0}\sum\limits_{(M, w)\in \mathcal{L}_n} z^ny^{\alpha(M,w)}\lambda^{\beta(M,w)}\\
     		&=&\cfrac{1}{
     			1-a_0x-\cfrac{b_1 z^2}{
     				1-a_1x-\cfrac{b_2z^2}{
     					1-a_2x-\cfrac{b_3 z^2}{
     						1-a_3x-\cdots
     		}}}}
     	\end{array}
     		\]
     		with
     		\[
     		a_n = \lambda + 2n,\qquad
     		b_n = (\lambda + n -1)(y + n -1).
     		\]
     		This completes the proof. \qed
     \section{Proof of Conjecture \ref{con4.1}} 
    In this section, we aim to construct an involution on $\mathfrak{S}_n$ that transforms the bistatistic $(\des_2, \ear)$ into $(\ear, \des_2)$, thereby confirming Conjecture \ref{con4.1}.
    To this end, we describe another bijection $\Psi_{FZ}$ of Foata and Zeilberger \cite{FZ} between permutations and restricted Laguerre histories.
    Here we adopt the equivalent definition of $\Psi_{FZ}$ given by Corteel \cite{Corteel2007}.

      Given a permutation $\pi\in \mathfrak{S}_n$, 
       define $\Psi_{FZ}(\pi)=(M,w)$, where for each   $i\in[n]$
     $$
     m_i=\left\{
     \begin{array}{ll}
     	U&\quad\mbox{if $ \pi(i)>i<\pi^{-1}(i)$},  \\
     	D&\quad\mbox{if $ \pi(i)<i>\pi^{-1}(i)$},  \\
     	H_1&\quad\mbox{if $ \pi(i)\geq i\geq \pi^{-1}(i)$},\\
     	H_2&\quad\mbox{if $ \pi(i)<i<\pi^{-1}(i)$},
     \end{array}
     \right.
     $$
     and $$
     w_i=\nest_i(\pi):=	\begin{cases}
     	\#\,\{j\mid j<i\leq \pi(i)<\pi(j)\}&\quad \text{if $\pi(i)\geq i$},\\
     		\#\,\{j\mid \pi(j)<\pi(i)<i<j \}&\quad \text{if $\pi(i)<i$}.\\ 
     \end{cases}
     $$

     For example, if $\pi=641958(10)732$, then we have
     $$\Psi_{FZ}(\pi)=(UUH_2H_1H_1H_1UDDD, 0100210210).$$
     
    The inverse algorithm   $\Psi_{FZ}^{-1}$ generating a permutation $\pi$  from a Laguerre history $(M,w)\in\L_{n}$  can be discribed as follows.
     \begin{itemize}
     	
     	\item Let $F=\{i\mid m_i=U\,\, \mbox{or}\,\, H_1\}$, $F'=\{i\mid m_i=D \,\, \mbox{or}\,\, H_1\}$, $G=\{i\mid m_i=D\,\, \mbox{or}\,\, H_2\}$ and $G'=\{i\mid m_i=U\,\, \mbox{or}\,\, H_2\}$. 
     	\item Let $f$ and $g$ be the increasing permutations of $F$ and $G$, respectively.
     	
     	\item Construct the biword $\binom{f}{f'}$: for each $i$ in the first row $f$, processed from 
     	the \blue{ largest}  (rightmost) element , the entry in $f'$ that is below  $i$ is the $(w_i+1)$-th \blue{largest } element $j\in F'$ such that $j\geq i$ and $j$ has not yet been chosen.
     	
     	\item Construct the biword $\binom{g}{g'}$: for each $i$ in the first row $g$, processed from the \blue{smallest } (leftmost) element , the entry in $g'$ that is below  $i$ is the $(w_i+1)$-th \blue{smallest} element $j\in G'$ such that $j< i$ and $j$ has not yet been chosen. 
     	
     	\item Rearrange the columns so that the top row is in increasing order, and we obtain the
     	permutation $\pi=\Psi^{-1}_{FZ}(M,w)$ as the bottom row of the rearranged biword.
     \end{itemize}
     
     For example,  if  $(M,w)=(UUH_2H_1H_1H_1UDDD, 0100210210)$, then we have
     $$
     F=\{1,2,4,5,6,7\}, F'=\{4,5,6,8,9,10\}, G=\{3,8,9,10\}\,\, \mbox{and}\,\,  G'=\{1,2,3,7\},
     $$
     
     \[
     \binom{f}{f'}=
     \begin{pmatrix}
     	1 & 2 &  4 & 5 & 6 & 7 \\
     	6 & 4 & 9 & 5 & 8 & 10
     \end{pmatrix}, 
     \]
     and 
     \[
     \binom{g}{g'}=
     \begin{pmatrix}
     	3 &  8 & 9 & 10 \\
     	1 &   7 & 3 & 2
     \end{pmatrix}
     \]
     Rearrange the columns, we obtain
     \[ \begin{pmatrix}
     	1&2&3&4&5&6&7&8&9&10 \\
     	6&4&1&9&5&8&10&7&3&2
     \end{pmatrix},\]
     and hence $\pi=\Psi^{-1}_{FZ}(M,w)=641958(10)732$.
     
    Following \cite{Corteel2007},  given a permutation $\pi$, define
     $$
     \cross_i(\pi):=	\begin{cases}
     	\#\,\{j\mid j<i\leq \pi(j)<\pi(i)\}&\quad \text{if $\pi(i)\geq i$},\\
     	\#\,\{i\mid \pi(i)<\pi(j)< i<j \}&\quad \text{if $\pi(i)<i$}.\\ 
     \end{cases}
     $$
     
     \begin{lemma}\label{lem-Cor}  \upshape{(\cite{Corteel2007}, Lemma 3)}
     		Given a permutation $\pi\in \mathfrak{S}_n$,  let  $(M,w)=\Psi_{FZ}(\pi)$,  where  $M=m_1m_2\ldots m_n$ and $w=w_1w_2\ldots w_n$. Then  for each $i$, 
     		we have  
     	$$
     	\nest_i(\pi)+\cross_i(\pi)=h_i(M)
     	$$
     	when $\pi(i)\geq i$, 
     	and  
     		$$
     	\nest_i(\pi)+\cross_i(\pi)=h_i(M)-1
     	$$
     	otherwise.
     	\end{lemma}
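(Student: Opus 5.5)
The plan is to identify the height $h_i(M)$ with a simple arc-counting quantity, after which both identities reduce to a partition of a set. For $1\le i\le n$ I define
\[
N_i(\pi):=\#\{j\mid j<i\le \pi(j)\},
\]
the number of positions before $i$ whose value is at least $i$. I will then show $h_i(M)=N_i(\pi)$ and check that $\nest_i+\cross_i$ counts exactly $N_i$, respectively $N_i-1$, objects.

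\emph{Step 1: $h_i(M)=N_i(\pi)$.} I argue by induction on $i$. Clearly $N_1=0=h_1(M)$. Passing from $i$ to $i+1$, the index $j=i$ enters the set exactly when $\pi(i)>i$. The indices $j<i$ that leave are those with $\pi(j)=i$, and there is one such index exactly when $\pi^{-1}(i)<i$. Hence
\[
N_{i+1}-N_i=[\pi(i)>i]-[\pi^{-1}(i)<i].
\]
I then compare this with the step type $m_i$ in the definition of $\Psi_{FZ}$:
\begin{itemize}
\item for $U$ the increment is $1-0=1$;
\item for $D$ it is $0-1=-1$;
\item for $H_2$ it is $0-0=0$;
\item for $H_1$ it is $0$ in both subcases, namely a fixed point ($0-0$) or $\pi^{-1}(i)<i<\pi(i)$ ($1-1$).
\end{itemize}
This matches $h_{i+1}(M)-h_i(M)$ exactly, so $h_i(M)=N_i$.

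\emph{Step 2: the case $\pi(i)\ge i$.} Here $\nest_i$ counts the $j<i$ with $\pi(j)>\pi(i)$, and $\cross_i$ counts the $j<i$ with $i\le\pi(j)<\pi(i)$. Every $j<i$ has $\pi(j)\ne\pi(i)$, so these two sets partition $\{j<i\mid \pi(j)\ge i\}$. Therefore the sum equals $N_i=h_i(M)$.

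\emph{Step 3: the case $\pi(i)<i$.} I read $\cross_i$ as $\#\{j\mid \pi(i)<\pi(j)<i<j\}$, since the index in the displayed definition is evidently a typo for $j$. Then $\nest_i$ and $\cross_i$ partition $\{j>i\mid \pi(j)<i\}$ according to whether $\pi(j)<\pi(i)$ or $\pi(j)>\pi(i)$. I use the standard balance identity
\[
\#\{j<i\mid \pi(j)\ge i\}=\#\{j\ge i\mid \pi(j)<i\}.
\]
It holds because the $i-1$ values below $i$ and the $i-1$ positions below $i$ must be matched up. Since $j=i$ itself lies in the right-hand set (as $\pi(i)<i$), we get $\#\{j>i\mid \pi(j)<i\}=N_i-1=h_i(M)-1$.

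The only real obstacle is Step 1, specifically checking carefully that the boundary cases (fixed points and $\pi(j)=i$) are handled consistently with the non-strict inequalities in the definitions of $H_1$ and of $\cross_i$. The rest is bookkeeping.
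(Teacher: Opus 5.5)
Your proof is correct. The paper gives no argument for this lemma of its own; it simply imports it as Lemma~3 of Corteel, so there is no in-paper proof to follow or diverge from. Your argument is the standard one behind that citation.

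The induction establishing $h_i(M)=N_i(\pi)=\#\{j<i\mid \pi(j)\ge i\}$ checks all four step types, including both subcases of $H_1$, and is right. The partitions in Steps~2 and~3 are also right, since $\pi(j)\neq\pi(i)$ for $j\neq i$. In the case $\pi(i)<i$, the balance identity $\#\{j<i\mid \pi(j)\ge i\}=\#\{j\ge i\mid \pi(j)<i\}$, with $j=i$ removed from the right-hand set, gives exactly the $-1$. Your reading of the index typo in the second case of $\cross_i$ (count $j$, not $i$) is the intended one.

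What your self-contained version buys is a by-product the paper uses without comment. Since $h_i(M)=N_i\ge 0$ and $N_{n+1}=0$, the path $M$ is a genuine Motzkin path. Since $\cross_i(\pi)\ge 0$, the weights $w_i=\nest_i(\pi)$ satisfy the bounds $w_i\le h_i(M)$, respectively $w_i\le h_i(M)-1$. Together these show that $\Psi_{FZ}(\pi)$ really lies in $\mathcal{L}_n$.
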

     Let $\Rec(\pi)$  (resp., $\Ear(\pi)$, $\Pdrop(\pi)$) denote the set of records  (resp., exclusive antirecord cycle peaks, pure drops) of $\pi$. 
      \begin{observation}\label{ob0}
     	For any permutation $\pi\in \mathfrak{S}_n$, we have 
     	$$
     	\Rec(\pi)=\{i\mid \pi(i)\geq i,\, \nest_i(\pi)=0\},
     	$$
     	$$
     	\Pdrop(\pi)=\{i\mid \pi(i)<i>\pi^{-1}(i), \,\, \cross_i(\pi)=0\},
     	$$
     	$$
     	\Ear(\pi)=\{i\mid \pi(i)<i>\pi^{-1}(i), \,\nest_i(\pi)=0\},
     	$$
     	$$
     	\Rec(\pi^{-1})=\Rmin(\pi),
     	$$
     	and 
     	$$
     	\Des_2(\pi)=\{i\mid i\in \Rec(\pi), i+1\notin \Rec(\pi), 1\leq i<n\}. 
     	$$
     \end{observation}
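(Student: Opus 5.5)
The plan is to verify each of the five identities directly from the definitions; no earlier result is needed beyond the definitions of $\nest_i$ and $\cross_i$ recalled from \cite{Corteel2007}. The key preliminary remark is that a record $i$ always satisfies $\pi(i)\geq i$. Indeed, $\pi(1),\ldots,\pi(i)$ are $i$ distinct positive integers, all at most $\pi(i)$, so $\pi(i)\geq i$.

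\emph{Records.} When $\pi(i)\geq i$, the condition $i\leq \pi(i)$ in the definition of $\nest_i(\pi)$ holds automatically. So $\nest_i(\pi)=0$ says exactly that no $j<i$ has $\pi(j)>\pi(i)$, which is the record condition. Combined with the preliminary remark, this gives $\Rec(\pi)=\{i\mid \pi(i)\geq i,\ \nest_i(\pi)=0\}$.

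\emph{Pure drops.} Here $\cross_i(\pi)$ for $\pi(i)<i$ is read as $\#\{j\mid \pi(i)<\pi(j)<i<j\}$.
\begin{itemize}
\item Suppose $i$ is a pure drop. The letter $i$ cannot sit at a position $j>i$, since then $\pi(j)=i\in[\pi(i),i]$. It cannot sit at position $i$, since $\pi(i)\neq i$. Hence $\pi^{-1}(i)<i$, so $i$ is a cycle peak, and clearly $\cross_i(\pi)=0$.
\item Conversely, suppose $\pi(i)<i>\pi^{-1}(i)$ and $\cross_i(\pi)=0$. Take any $j>i$. Then $\pi(j)\neq \pi(i)$ because $j\neq i$. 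Also $\pi(j)\neq i$ because $\pi^{-1}(i)<i$. Finally $\pi(j)$ is not strictly between $\pi(i)$ and $i$ because $\cross_i(\pi)=0$. So $i$ is a pure drop.
\end{itemize}

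\emph{Exclusive antirecord cycle peaks.} For a cycle peak $i$ we have $\pi(i)<i$, so by the preliminary remark $i$ is never a record; the ``not a record'' clause is therefore automatic. For $\pi(i)<i$, $\nest_i(\pi)=\#\{j\mid \pi(j)<\pi(i)<i<j\}$. Its vanishing means every $j>i$ has $\pi(j)>\pi(i)$, which is the antirecord condition; for $i=n$ both sides hold vacuously.

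\emph{Records of $\pi^{-1}$.} The index $i$ is a record of $\pi^{-1}$ iff $\pi^{-1}(j)<\pi^{-1}(i)$ for all $j<i$. Equivalently, every value smaller than $i$ occurs to the left of the value $i$ in $\pi$. This says precisely that $i$ is a right-to-left minimum value of $\pi$, i.e., $i\in\Rmin(\pi)$.

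\emph{Descents of type $2$.} Let $i<n$ be a record, so $\pi(i)=\max\{\pi(1),\ldots,\pi(i)\}$.
\begin{itemize}
\item If $i+1$ is also a record, then $\pi(i+1)>\pi(i)$, so $i$ is not a descent.
\item If $i+1$ is not a record, then $\pi(i+1)$ is smaller than this maximum $\pi(i)$, so $i$ is a descent.
\end{itemize}
Hence $\Des_2(\pi)$ consists exactly of the records $i<n$ with $i+1\notin\Rec(\pi)$.

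None of these steps is a real obstacle. The only point requiring care is the pure-drop case: one must use the cycle-peak hypothesis $\pi^{-1}(i)<i$ to exclude the endpoint value $i$ from positions $j>i$, which $\cross_i$ alone does not control.
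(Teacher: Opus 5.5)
Your proposal is correct and amounts to the direct verification from the definitions that the paper leaves implicit; the paper states this as an Observation with no proof. The points you single out are exactly the ones that need care: every record satisfies $\pi(i)\geq i$, the drop case of $\mathsf{cross}_i$ must be read as $\#\{j\mid \pi(i)<\pi(j)<i<j\}$ (correcting the paper's typo), and $\pi^{-1}(i)<i$ is what excludes the value $i$ in the pure-drop case.
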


     The following lemma follows readily from Lemma \ref{lem-Cor}, Observation \ref{ob0}, and the definition of $\Psi_{FZ}$.
     \begin{lemma}\label{lem-PsiFZ}
     	Given a  permutation $\pi\in \mathfrak{S}_n$,  let  $(M,w)=\Psi_{FZ}(\pi)$,  where  $M=m_1m_2\ldots m_n$ and $w=w_1w_2\ldots w_n$. Then  we have
     	\begin{equation}\label{eq-ear-Mot}
     		\ear(\pi)=\#\,\{k\mid (m_k, w_k)=(D, 0)\}
     	\end{equation}
     	\begin{equation}\label{eq-pdrop-Mot}
     		\pdrop(\pi)=\#\,\{k\mid (m_k, w_k)=(D, h_k(M)-1)\}
     	\end{equation}
     	
     	and 
     	\begin{equation}\label{eq-Rec-Mot}
     		\Rec(\pi)=\{k\mid (m_k, w_k)=(H_1, 0)\}| \cup\{k\mid (m_k, w_k)=(U, 0)\}.
     	\end{equation}
     \end{lemma}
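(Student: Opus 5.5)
We prove the three identities of Lemma \ref{lem-PsiFZ} by combining the step types of $\Psi_{FZ}$ with the characterizations in Observation \ref{ob0} and the complementarity of Lemma \ref{lem-Cor}. The basic dictionary comes from the definition of $m_i$. An index $i$ is a cycle peak, i.e.\ $\pi(i)<i>\pi^{-1}(i)$, exactly when $m_i=D$. An index $i$ with $\pi(i)\geq i$ is exactly one with $m_i\in\{U,H_1\}$. Indeed, $H_1$ covers $\pi(i)\geq i\geq\pi^{-1}(i)$, which includes the fixed points, and $U$ covers $\pi(i)>i<\pi^{-1}(i)$. These are the only two cases with $\pi(i)\ge i$, since $H_2$ and $D$ both have $\pi(i)<i$. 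Throughout we use $w_i=\nest_i(\pi)$.

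For $\ear$, Observation \ref{ob0} gives $\Ear(\pi)=\{i\mid \pi(i)<i>\pi^{-1}(i),\ \nest_i(\pi)=0\}$. Translated through the dictionary, this is $\{k\mid m_k=D,\ w_k=0\}$, and taking cardinalities gives the first identity.

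For $\pdrop$, Observation \ref{ob0} characterizes pure drops as cycle peaks with $\cross_i(\pi)=0$. At such an $i$ we have $\pi(i)<i$, so Lemma \ref{lem-Cor} gives $\nest_i+\cross_i=h_i(M)-1$. Hence $\cross_i=0$ if and only if $w_i=\nest_i=h_i(M)-1$, which yields the second identity.

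For $\Rec$, Observation \ref{ob0} gives $\Rec(\pi)=\{i\mid\pi(i)\ge i,\ \nest_i(\pi)=0\}$. Since $\pi(i)\ge i$ if and only if $m_i\in\{U,H_1\}$, this is the union of $\{k\mid (m_k,w_k)=(U,0)\}$ and $\{k\mid (m_k,w_k)=(H_1,0)\}$.

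The only real obstacle lies in Observation \ref{ob0} itself, which we may assume. If one wanted to check it directly, the hardest part would be the $\Ear$ characterization. For a cycle peak $i$, the antirecord condition asks that no $j>i$ has $\pi(j)<\pi(i)$, which is exactly $\nest_i=0$. The non-record condition is automatic, since $\pi(i)<i$ forces some $j<i$ to have $\pi(j)\ge i>\pi(i)$. The corresponding checks for pure drops via crossings and for records via nestings are similar.
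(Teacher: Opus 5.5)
Your proposal is correct and takes the same approach as the paper, which says the lemma follows readily from the definition of $\Psi_{FZ}$, Observation \ref{ob0}, and Lemma \ref{lem-Cor}. You have written out exactly that translation: $m_i=D$ marks cycle peaks, $m_i\in\{U,H_1\}$ marks $\pi(i)\ge i$, and $\nest_i+\cross_i=h_i(M)-1$ converts $\cross_i=0$ into $w_i=h_i(M)-1$ for pure drops.
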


     Given a  Laguerre history  $(M,w)\in \mathcal{L}_n$ with $M=m_1m_2\ldots m_n$ and $w=w_1w_2\ldots w_n$, we can constrcut a Laguerre history  $\kappa(M,w)=(M,w')\in \mathcal{L}_n$  with $w'=w'_1w'_2\ldots w'_n$ by letting
     $$
    w'_i=	\begin{cases}
      h_i(M)-w_i&\quad \text{if $m_i=U, H_1$ },\\
     w_i&\quad \text{otherwise}.\\ 
     \end{cases}
     $$
    Clearly, the map $\kappa$ is an involution on $\mathcal{L}_n$.

    In the following, 
   we construct a bijection $\Phi$ from  $\mathfrak{S}_n$ onto  itself  which transforms the concerned permutation statistics as follows. 
    \begin{theorem}\label{thm-Phi}
    	For $n\geq 1$, the map $\Phi:=\Phi^{-1}_{FZ}\circ \kappa \circ \Psi_{FZ}$  induces a bijection  from  $\mathfrak{S}_n$ onto itself such that for any permutation $\pi\in \mathfrak{S}_n$, we have $\sigma=\Phi(\pi)$ verifying that
    	$$
    	\ear(\pi)=\des_2(\Phi(\pi))\,\,\,  \mbox{and}\,\,\,  \des_2(\sigma^{-1})=\des_2(\pi).
    	$$
    	 
    	\end{theorem}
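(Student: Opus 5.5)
The plan is to track both statistics through the three maps. Since $\kappa$ leaves the underlying $2$-Motzkin path $M$ unchanged, and only modifies weights on $U$ and $H_1$ steps, both identities should follow by reading off Lemma \ref{lem-PsiFZ} on the $\pi$ side and Lemma \ref{lem-PhiFZ} on the $\sigma$ side. Fix $\pi\in\mathfrak{S}_n$, and write $(M,w)=\Psi_{FZ}(\pi)$, $(M,w')=\kappa(M,w)$ and $\sigma=\Phi^{-1}_{FZ}(M,w')$. Because $\Psi_{FZ}$ and $\Phi^{-1}_{FZ}$ are bijections between $\mathfrak{S}_n$ and $\L_n$, and $\kappa$ is an involution on $\L_n$, the map $\Phi$ is a bijection. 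Since the path is the same, $h_k(M)$ is the same on both sides, which is what makes the comparison possible.

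\emph{First identity.} By \eqref{eq-ear-Mot}, $\ear(\pi)=\#\{k\mid (m_k,w_k)=(D,0)\}$. On $D$ steps, $\kappa$ does nothing, so $w'_k=w_k$ there. Hence this count equals $\#\{k\mid (m_k,w'_k)=(D,0)\}$, which by Lemma \ref{lem-PhiFZ} is $\des_2(\sigma)$.

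\emph{Second identity.} I will prove the stronger set equality $\Des_2(\sigma^{-1})=\Des_2(\pi)$.
\begin{enumerate}
\item By Lemma \ref{lem-PsiFZ}, $\Rec(\pi)=\{k\mid m_k\in\{U,H_1\},\ w_k=0\}$.
\item On $U$ and $H_1$ steps, $w'_k=h_k(M)-w_k$. So $w_k=0$ if and only if $w'_k=h_k(M)$.
\item Therefore $\Rec(\pi)=\{k\mid m_k\in\{U,H_1\},\ w'_k=h_k(M)\}$. By \eqref{eq-Rec-Mot} in Lemma \ref{lem-PhiFZ}, this set is $\Rmin(\sigma)$.
\item Observation \ref{ob0} gives $\Rmin(\sigma)=\Rec(\sigma^{-1})$, so $\Rec(\sigma^{-1})=\Rec(\pi)$.
\item The last identity of Observation \ref{ob0} shows that $\Des_2$ is determined by the set $\Rec$ alone (together with $n$). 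Hence $\Des_2(\sigma^{-1})=\Des_2(\pi)$, and in particular $\des_2(\sigma^{-1})=\des_2(\pi)$.
\end{enumerate}

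The only delicate point is consistency of conventions. $\Rmin(\sigma)$ is defined as a set of \emph{values}, while $\Rec(\sigma^{-1})$ is a set of \emph{positions} of $\sigma^{-1}$. These agree exactly because positions of $\sigma^{-1}$ are values of $\sigma$. Likewise, the index $k$ in Lemma \ref{lem-PhiFZ} labels the step of the value $k$, and the index $k$ in Lemma \ref{lem-PsiFZ} labels the position $k$. I expect this bookkeeping, rather than any real combinatorics, to be the main thing to verify carefully, including the boundary convention that $n\in\Rmin$.
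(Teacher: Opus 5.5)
Your proposal is correct and follows the paper's proof essentially step for step. Like the paper, you read $\ear(\pi)$ off the $(D,0)$ steps of $\Psi_{FZ}(\pi)$, which $\kappa$ leaves unchanged, and you obtain $\Rec(\pi)=\Rmin(\sigma)=\Rec(\sigma^{-1})$ from the equivalence $w_k=0\iff w'_k=h_k(M)$ on $U,H_1$ steps before invoking Observation~\ref{ob0}. Your bookkeeping of positions versus values and the set-level conclusion $\Des_2(\sigma^{-1})=\Des_2(\pi)$ are implicit in the paper's argument.
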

    	\begin{proof}	 
    Let $\pi\in \mathfrak{S}_n$, $(M,w)=\Psi_{FZ}(\pi)$, $(M,w')=\kappa(M,w)$ and $\sigma=\Phi^{-1}_{FZ}(M, w')$, where $M=m_1m_2\ldots m_n$, $w=w_1w_2\ldots w_n$ and $w'=w'_1w'_2\ldots w'_n$. 
By Lemmas \ref{lem-PhiFZ} and \ref{lem-PsiFZ} and the definition of $\kappa$, we deduce that 
   $$
   \ear(\pi)=\#\,\{k\mid (m_k, w_k)=(D, 0)\}=\#\,\{k\mid (m_k, w'_k)=(D, 0)\}=\des_2(\sigma)
   $$
   and 
   $$
   \begin{array}{lll}
   	 \Rec(\pi)&=&  \{k\mid (m_k, w_k)=(H_1, 0)\}| \cup\{k\mid (m_k, w_k)=(U, 0)\} \\
   	 &=&  \{k\mid (m_k, w'_k)=(H_1, h_k(M))\}| \cup\{k\mid (m_k, w'_k)=(U, h_k(M))\}\\
   	 &=& \Rmin(\sigma)\\
   	 &=& \Rec(\sigma^{-1})
   	\end{array}
   $$
   where the last equality follows from Observation \ref{ob0}.  Again by Observation \ref{ob0},  $\Rec(\pi)=\Rec(\sigma^{-1})$ would imply that 
  $\des_2(\sigma^{-1})=\des_2(\pi)$, completing the proof. 
  \end{proof}
  Let $\iota$ denote the map $\iota:\pi\rightarrow \pi^{-1}$. Now we are in position to construct the involution $\Psi$ on $\mathfrak{S}_n$ which transforms the bistatistic $(\ear, \des_2)$ into the bistatistic  $(\des_2, \ear)$. 
   \begin{theorem}\label{thm-Psi}
  	For $n\geq 1$, the map $\Psi:=\Phi^{-1}\circ \iota \circ \Phi$ is  an involution on    $\mathfrak{S}_n$   such that for any permutation $\pi\in \mathfrak{S}_n$, we have  
  	$$
  	(\ear, \des_2)\pi= (\des_2, \ear)\Psi(\pi).
  	$$
  \end{theorem}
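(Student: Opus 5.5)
The plan is to derive the result directly from Theorem \ref{thm-Phi}, using that $\Phi$ is a bijection and $\iota$ is an involution.

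\emph{Involution.} Since $\Phi=\Phi^{-1}_{FZ}\circ\kappa\circ\Psi_{FZ}$ is a composition of bijections, $\Phi$ is a bijection of $\mathfrak{S}_n$, and $\iota\circ\iota=\mathrm{id}$. Hence
$$\Psi\circ\Psi=\Phi^{-1}\circ\iota\circ\Phi\circ\Phi^{-1}\circ\iota\circ\Phi=\Phi^{-1}\circ\iota\circ\iota\circ\Phi=\mathrm{id},$$
so $\Psi$ is an involution.

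\emph{Statistics.} Fix $\pi$ and set $\sigma=\Phi(\pi)$ and $\tau=\Psi(\pi)=\Phi^{-1}(\sigma^{-1})$, so that $\Phi(\tau)=\sigma^{-1}$. Theorem \ref{thm-Phi} applied to $\pi$ gives
$$\ear(\pi)=\des_2(\sigma),\qquad \des_2(\pi)=\des_2(\sigma^{-1}).$$
Theorem \ref{thm-Phi} applied to $\tau$ gives
$$\ear(\tau)=\des_2(\Phi(\tau))=\des_2(\sigma^{-1}),\qquad \des_2(\tau)=\des_2\bigl((\Phi(\tau))^{-1}\bigr)=\des_2(\sigma).$$
Combining the two lines, $\ear(\pi)=\des_2(\sigma)=\des_2(\tau)$ and $\des_2(\pi)=\des_2(\sigma^{-1})=\ear(\tau)$. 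This is exactly $(\ear,\des_2)\pi=(\des_2,\ear)\Psi(\pi)$.

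\emph{Potential obstacle.} The only point needing care is that Theorem \ref{thm-Phi} holds for every permutation, so it may legitimately be applied to $\tau$ as well as to $\pi$. This is immediate because $\Phi$ is a bijection, which makes $\tau=\Phi^{-1}(\sigma^{-1})$ well defined. No genuine difficulty arises beyond that bookkeeping. As a consequence, the symmetry asserted in Conjecture \ref{con4.1} follows by summing over $\pi\in\mathfrak{S}_n$.
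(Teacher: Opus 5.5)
Your proof is correct and takes essentially the same route as the paper: both derive the result entirely from Theorem \ref{thm-Phi}, applied to $\pi$ and to $\Psi(\pi)=\Phi^{-1}(\sigma^{-1})$. The only cosmetic difference is in the second coordinate: you get $\des_2(\Psi(\pi))=\ear(\pi)$ by applying the second identity of Theorem \ref{thm-Phi} to $\tau$, whereas the paper applies the already-derived identity $\ear(\Psi(\rho))=\des_2(\rho)$ with $\rho=\Psi(\pi)$ and then uses that $\Psi$ is an involution.
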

    \begin{proof}
    In view of  Theorem \ref{thm-Phi}, the map $\Psi:=\Phi^{-1}\circ \iota \circ \Phi$ is indeed an involution on $\mathfrak{S}_n$. Given a permutation $\pi\in \mathfrak{S}_n$,  let   $\sigma=\Phi(\pi)$. 
    Again by Theorem \ref{thm-Phi}, we have
    \begin{equation}\label{eq-Psi}
     \ear(\Psi(\pi))=	\ear(\Phi^{-1}(\sigma^{-1}))=\des_2(\sigma^{-1})=\des_2(\pi).
    	\end{equation}
  This yields that
   \[
   \des_2(\Psi(\pi))=\ear\bigl(\Psi(\Psi(\pi))\bigr)=\ear(\pi),
   \]
  where the second equality follows from the fact that the map $\Psi$ is an involution.  Hence, we have conculded that $$
  	(\ear, \des_2)\pi= (\des_2, \ear)\Psi(\pi).
  $$
  as desired, completing the proof. 
    	 	\end{proof}

    	 \section{Proof of Theorem \ref{thm-equi}}
    	 This section is devoted to the proof of Theorem \ref{thm-equi}. 
    	First we construct a map $\Gamma:\mathcal{L}_n\rightarrow \mathfrak{S}_n$. 
    	To this end, we need the graphical representation of  a permutation. Following  \cite{Corteel2007}, 
    	for each permutation $\pi\in\mathfrak{S}_n$, we can associate to  $\pi$ a graphical representation by placing vertices $1,2,\dots,n$ along a horizontal axis  and then drawing an arc  connecting  $i$ to $\pi(i)$ above (resp.,  below) the horizontal axis in case $\pi(i)\geq i$ (resp., $\pi(i)<i$).  See Figure \ref{fig:graphic} for an example. In this geometrical setting,  each vertex thus has  $\text{out-degree}=\text{in-degree}=1$.
    		\begin{figure}[htbp]
    		\centering
    		\begin{tikzpicture}[x=0.9cm,y=0.9cm]

    			% permutation:
    			% pi = (6,4,1,9,5,8,10,3,7,2)
    			
    			\foreach \i in {1,...,10}{
    				\coordinate (v\i) at (\i,0);
    			}
    			
    			% baseline
    			\draw[line width=0.55pt] (0.55,0) -- (10.45,0);

    			\draw[line width=0.8pt,line cap=round,line join=round]
    			(v1) .. controls (2.05,1.18) and (4.95,1.18) .. (v6);
    			
    			\draw[line width=0.8pt,line cap=round,line join=round]
    			(v2) .. controls (2.45,0.54) and (3.55,0.54) .. (v4);
    			
    			\draw[line width=0.8pt,line cap=round,line join=round]
    			(v4) .. controls (5.05,1.48) and (7.95,1.48) .. (v9);
    			
    			\draw[line width=0.8pt,line cap=round,line join=round]
    			(v5) .. controls +(0.45,0.72) and +(-0.45,0.72) .. (v5);
    			
    			\draw[line width=0.8pt,line cap=round,line join=round]
    			(v6) .. controls (6.48,0.56) and (7.52,0.56) .. (v8);
    			
    			\draw[line width=0.8pt,line cap=round,line join=round]
    			(v7) .. controls (7.72,0.92) and (9.30,0.92) .. (v10);

    			%--------------------------------------------------------
    			% arcs below the line
    			%--------------------------------------------------------
    			
    			\draw[line width=0.8pt,line cap=round,line join=round]
    			(v3) .. controls (2.55,-0.52) and (1.45,-0.52) .. (v1);
    			
    			\draw[line width=0.8pt,line cap=round,line join=round]
    			(v8) .. controls (6.75,-1.00) and (4.25,-1.00) .. (v3);
    			
    			\draw[line width=0.8pt,line cap=round,line join=round]
    			(v9) .. controls (8.45,-0.48) and (7.55,-0.48) .. (v7);
    			
    			\draw[line width=0.8pt,line cap=round,line join=round]
    			(v10) .. controls (8.40,-1.42) and (3.60,-1.42) .. (v2);

    			\foreach \i in {1,...,10}{
    				\node[circle,fill=black,inner sep=1.45pt] at (v\i) {};
    				\node[font=\small,below=4pt] at (v\i) {$\i$};
    			}

    		\end{tikzpicture}
    		
    		\caption{The graphic  representation of $\pi=641958(10)372$.}
    		\label{fig:graphic}
    	\end{figure}

    	Given a Laguerre history $(M,w)\in\L_{n}$ with $M=m_1m_2\ldots m_n$ and $w=w_1w_2\ldots w_n$, we can generate a permutation $\pi=\Gamma(M,w)$ by the following procedure. 
    	 \begin{itemize}
    	 	\item[Step 1.] Place vertices $1,2,\ldots n$ along a horizontal axis. 
    	 	\item[Step 2.] Let $F=\{i\mid m_i=U\,\, \mbox{or}\,\, H_1\}$, $F'=\{i\mid m_i=D \,\, \mbox{or}\,\, H_1\}$, $G=\{i\mid m_i=D\,\, \mbox{or}\,\, H_2\}$ and $G'=\{i\mid m_i=U\,\, \mbox{or}\,\, H_2\}$. 
    	 	\item[Step 3.] Starting from the   \blue{largest} element of $F$, for each $i\in F$,  
    	 	choose the $(w_i+1)$-th \blue{largest} element $j\in F'$ such that $j\geq i$ and $j$ has not yet been chosen. 
    	 	Draw an arc above the horizontal axis connecting vertices $i$ and $j$.
    	 	
    	 	\item[Step 4.] Starting from the \blue{smallest} element of $G$, for each $i\in G$,
    	 	\begin{itemize}
    	 		\item if $m_i=H_2$, choose the $(w_i+1)$-th \blue{smallest} element $j\in G'$ such that $j<i$ and $j$ has not yet been selected. Draw an arc below the horizontal axis connecting vertices $i$ and $j$;
    	 		\item if $m_i = D$, find the element $j\in G'$  such that  
    	 		\begin{itemize}
    	 			\item $j<i$;
    	 			\item 	$j$ has not yet been selected;
    	 			\item  there exists a path of arcs from vertex $i$ to vertex $j$.
    	 			\end{itemize}
    	 		If $w_i=0$, draw an arc below the horizontal axis connecting vertices $i$ and $j$. If $w_i>0$, find the $w_i$-th \blue{smallest} element $k\in G'\setminus\{j\}$ such that $k<i$ and $k$ has not yet been selected. Draw an arc below the horizontal axis connecting vertices $i$ and $k$.
    	 	\end{itemize}

    	 \end{itemize}
    	 
    	 For example, if $(M,w)=(UUH_2H_1UDDD, 01110100)$, then
    	 $F=\{1,2,4,5\}$, $F'=\{4,6,7,8\}$, $G=\{3,6,7,8\}$ and $G'=\{1,2,3,5\}$. The successive construction of $\Gamma(M,w)$ is illustrated in Figure \ref{fig:Gamma}. When $i=6$, we have $m_i=D$. In this case, we first locate the element  $3\in G'$ that has not yet been selected such that there exists a path of arcs from vertex $3$ to vertex $6$.  Since $w_i=1$, we join vertex $1$ and vertex $6$ by an arc below the horizontal axis, as shown in Figure \ref{fig:Gamma}.
    	 
    	 \begin{figure}[htbp]
    	 	\centering
    	 	
    	 	\resizebox{0.90\textwidth}{!}{%
    	 		\begin{tikzpicture}[
    	 			>=Stealth,
    	 			box/.style={
    	 				draw=black!45,
    	 				fill=black!2,
    	 				rounded corners=1pt,
    	 				line width=0.45pt,
    	 				minimum width=5.15cm,
    	 				minimum height=2.55cm
    	 			},
    	 			baseline/.style={
    	 				draw=black,
    	 				line width=0.55pt
    	 			},
    	 			oldedge/.style={
    	 				draw=black,
    	 				line width=0.72pt,
    	 				line cap=round,
    	 				line join=round
    	 			},
    	 			newedge/.style={
    	 				draw=blue!70!violet,
    	 				line width=1.00pt,
    	 				line cap=round,
    	 				line join=round
    	 			},
    	 			point/.style={
    	 				circle,
    	 				draw=black,
    	 				fill=white,
    	 				inner sep=0.95pt
    	 			},
    	 			active/.style={
    	 				circle,
    	 				draw=red!70!black,
    	 				fill=red!70,
    	 				inner sep=1.20pt
    	 			},
    	 			flow/.style={
    	 				->,
    	 				line width=0.72pt
    	 			}
    	 			]
    	 			
    	 			%============================================================
    	 			% Parameters
    	 			%============================================================
    	 			\def\innerscale{0.70}
    	 			
    	 			%============================================================
    	 			% Coordinates of the eight vertices
    	 			%============================================================
    	 			\newcommand{\statecoords}{
    	 				\coordinate (p1) at (-2.03,0);
    	 				\coordinate (p2) at (-1.45,0);
    	 				\coordinate (p3) at (-0.87,0);
    	 				\coordinate (p4) at (-0.29,0);
    	 				\coordinate (p5) at ( 0.29,0);
    	 				\coordinate (p6) at ( 0.87,0);
    	 				\coordinate (p7) at ( 1.45,0);
    	 				\coordinate (p8) at ( 2.03,0);
    	 			}
    	 			
    	 			%============================================================
    	 			% Baseline
    	 			%============================================================
    	 			\newcommand{\drawbaseline}{
    	 				\draw[baseline] (-2.28,0) -- (2.28,0);
    	 			}
    	 			
    	 			%============================================================
    	 			% Vertices and labels
    	 			%============================================================
    	 			\newcommand{\drawstatevertices}{
    	 				\foreach \i in {1,...,8}{
    	 					\node[point] at (p\i) {};
    	 					\node[font=\scriptsize,below=2.1pt] at (p\i) {$\i$};
    	 				}
    	 			}
    	 			
    	 			%============================================================
    	 			% Box positions
    	 			%============================================================
    	 			\node[box] (B1) at (0,0) {};
    	 			\node[box] (B2) at (5.90,0) {};
    	 			\node[box] (B3) at (11.80,0) {};
    	 			
    	 			\node[box] (B4) at (0,-3.35) {};
    	 			\node[box] (B5) at (5.90,-3.35) {};
    	 			\node[box] (B6) at (11.80,-3.35) {};
    	 			
    	 			\node[box] (B7) at (2.95,-6.70) {};
    	 			\node[box] (B8) at (8.85,-6.70) {};
    	 			
    	 			%============================================================
    	 			% F1
    	 			%============================================================
    	 			\begin{scope}[shift={(B1.center)}]
    	 				\begin{scope}[shift={(0,0.25)},scale=\innerscale]
    	 					
    	 					\statecoords
    	 					\drawbaseline
    	 					
    	 					% new upper edge 5--8
    	 					\draw[newedge]
    	 					(p5) .. controls +(0.52,1.12) and +(-0.52,1.12) .. (p8);
    	 					
    	 					\drawstatevertices
    	 					\node[active] at (p5) {};
    	 					
    	 				\end{scope}
    	 				
    	 				\node[font=\small] at (0,-0.90)
    	 				{$i=5$};
    	 			\end{scope}
    	 			
    	 			%============================================================
    	 			% F2
    	 			%============================================================
    	 			\begin{scope}[shift={(B2.center)}]
    	 				\begin{scope}[shift={(0,0.25)},scale=\innerscale]
    	 					
    	 					\statecoords
    	 					\drawbaseline
    	 					
    	 					% old edge
    	 					\draw[oldedge]
    	 					(p5) .. controls +(0.52,1.12) and +(-0.52,1.12) .. (p8);
    	 					
    	 					% new edge 4--6
    	 					\draw[newedge]
    	 					(p4) .. controls +(0.42,0.72) and +(-0.42,0.72) .. (p6);
    	 					
    	 					\drawstatevertices
    	 					\node[active] at (p4) {};
    	 					
    	 				\end{scope}
    	 				
    	 				\node[font=\small] at (0,-0.90)
    	 				{$i=4$};
    	 			\end{scope}
    	 			
    	 			%============================================================
    	 			% F3
    	 			%============================================================
    	 			\begin{scope}[shift={(B3.center)}]
    	 				\begin{scope}[shift={(0,0.25)},scale=\innerscale]
    	 					
    	 					\statecoords
    	 					\drawbaseline
    	 					
    	 					\draw[oldedge]
    	 					(p5) .. controls +(0.52,1.12) and +(-0.52,1.12) .. (p8);
    	 					
    	 					\draw[oldedge]
    	 					(p4) .. controls +(0.42,0.72) and +(-0.42,0.72) .. (p6);
    	 					
    	 					% new edge 2--4
    	 					\draw[newedge]
    	 					(p2) .. controls +(0.42,0.68) and +(-0.42,0.68) .. (p4);
    	 					
    	 					\drawstatevertices
    	 					\node[active] at (p2) {};
    	 					
    	 				\end{scope}
    	 				
    	 				\node[font=\small] at (0,-0.90)
    	 				{$i=2$};
    	 			\end{scope}
    	 			
    	 			%============================================================
    	 			% F4
    	 			%============================================================
    	 			\begin{scope}[shift={(B4.center)}]
    	 				\begin{scope}[shift={(0,0.25)},scale=\innerscale]
    	 					
    	 					\statecoords
    	 					\drawbaseline
    	 					
    	 					\draw[oldedge]
    	 					(p5) .. controls +(0.52,1.12) and +(-0.52,1.12) .. (p8);
    	 					
    	 					\draw[oldedge]
    	 					(p4) .. controls +(0.42,0.72) and +(-0.42,0.72) .. (p6);
    	 					
    	 					\draw[oldedge]
    	 					(p2) .. controls +(0.42,0.68) and +(-0.42,0.68) .. (p4);
    	 					
    	 					% new edge 1--7
    	 					\draw[newedge]
    	 					(p1) .. controls +(1.15,1.72) and +(-1.15,1.72) .. (p7);
    	 					
    	 					\drawstatevertices
    	 					\node[active] at (p1) {};
    	 					
    	 				\end{scope}
    	 				
    	 				\node[font=\small] at (0,-0.90)
    	 				{$i=1$};
    	 			\end{scope}
    	 			
    	 			%============================================================
    	 			% F5
    	 			% i = 3, new lower edge 2--3
    	 			%============================================================
    	 			\begin{scope}[shift={(B5.center)}]
    	 				\begin{scope}[shift={(0,0.25)},scale=\innerscale]
    	 					
    	 					\statecoords
    	 					\drawbaseline
    	 					
    	 					% old upper edges
    	 					\draw[oldedge]
    	 					(p5) .. controls +(0.52,1.12) and +(-0.52,1.12) .. (p8);
    	 					
    	 					\draw[oldedge]
    	 					(p4) .. controls +(0.42,0.72) and +(-0.42,0.72) .. (p6);
    	 					
    	 					\draw[oldedge]
    	 					(p2) .. controls +(0.42,0.68) and +(-0.42,0.68) .. (p4);
    	 					
    	 					\draw[oldedge]
    	 					(p1) .. controls +(1.15,1.72) and +(-1.15,1.72) .. (p7);
    	 					
    	 					% new lower edge 2--3
    	 					\draw[newedge]
    	 					(p2) .. controls +(0.20,-0.42) and +(-0.20,-0.42) .. (p3);
    	 					
    	 					\drawstatevertices
    	 					\node[active] at (p3) {};
    	 					
    	 				\end{scope}
    	 				
    	 				\node[font=\small] at (0,-0.90)
    	 				{$i=3$};
    	 			\end{scope}
    	 			
    	 			%============================================================
    	 			% F6
    	 			%============================================================
    	 			\begin{scope}[shift={(B6.center)}]
    	 				\begin{scope}[shift={(0,0.25)},scale=\innerscale]
    	 					
    	 					\statecoords
    	 					\drawbaseline
    	 					
    	 					% old upper edges
    	 					\draw[oldedge]
    	 					(p5) .. controls +(0.52,1.12) and +(-0.52,1.12) .. (p8);
    	 					
    	 					\draw[oldedge]
    	 					(p4) .. controls +(0.42,0.72) and +(-0.42,0.72) .. (p6);
    	 					
    	 					\draw[oldedge]
    	 					(p2) .. controls +(0.42,0.68) and +(-0.42,0.68) .. (p4);
    	 					
    	 					\draw[oldedge]
    	 					(p1) .. controls +(1.15,1.72) and +(-1.15,1.72) .. (p7);
    	 					
    	 					% old lower edge 2--3
    	 					\draw[oldedge]
    	 					(p2) .. controls +(0.20,-0.42) and +(-0.20,-0.42) .. (p3);
    	 					
    	 					% new lower edge 1--6
    	 					\draw[newedge]
    	 					(p1) .. controls +(1.10,-1.48) and +(-0.92,-1.48) .. (p6);
    	 					
    	 					\drawstatevertices
    	 					\node[active] at (p6) {};
    	 					
    	 				\end{scope}
    	 				
    	 				\node[font=\small] at (0,-0.90)
    	 				{$i=6$};
    	 			\end{scope}
    	 			
    	 			%============================================================
    	 			% F7
    	 			% i = 7, new lower edge 3--7
    	 			%============================================================
    	 			\begin{scope}[shift={(B7.center)}]
    	 				\begin{scope}[shift={(0,0.25)},scale=\innerscale]
    	 					
    	 					\statecoords
    	 					\drawbaseline
    	 					
    	 					% old upper edges
    	 					\draw[oldedge]
    	 					(p5) .. controls +(0.52,1.12) and +(-0.52,1.12) .. (p8);
    	 					
    	 					\draw[oldedge]
    	 					(p4) .. controls +(0.42,0.72) and +(-0.42,0.72) .. (p6);
    	 					
    	 					\draw[oldedge]
    	 					(p2) .. controls +(0.42,0.68) and +(-0.42,0.68) .. (p4);
    	 					
    	 					\draw[oldedge]
    	 					(p1) .. controls +(1.15,1.72) and +(-1.15,1.72) .. (p7);
    	 					
    	 					% old lower edge 2--3
    	 					\draw[oldedge]
    	 					(p2) .. controls +(0.20,-0.42) and +(-0.20,-0.42) .. (p3);
    	 					
    	 					% old lower edge 1--6
    	 					\draw[oldedge]
    	 					(p1) .. controls +(1.10,-1.48) and +(-0.92,-1.48) .. (p6);
    	 					
    	 					% new lower edge 3--7
    	 					\draw[newedge]
    	 					(p3) .. controls +(0.98,-1.18) and +(-0.98,-1.18) .. (p7);
    	 					
    	 					\drawstatevertices
    	 					\node[active] at (p7) {};
    	 					
    	 				\end{scope}
    	 				
    	 				\node[font=\small] at (0,-0.90)
    	 				{$i=7$};
    	 			\end{scope}
    	 			
    	 			%============================================================
    	 			% F8
    	 			%============================================================
    	 			\begin{scope}[shift={(B8.center)}]
    	 				\begin{scope}[shift={(0,0.25)},scale=\innerscale]
    	 					
    	 					\statecoords
    	 					\drawbaseline
    	 					
    	 					% old upper edges
    	 					\draw[oldedge]
    	 					(p5) .. controls +(0.52,1.12) and +(-0.52,1.12) .. (p8);
    	 					
    	 					\draw[oldedge]
    	 					(p4) .. controls +(0.42,0.72) and +(-0.42,0.72) .. (p6);
    	 					
    	 					\draw[oldedge]
    	 					(p2) .. controls +(0.42,0.68) and +(-0.42,0.68) .. (p4);
    	 					
    	 					\draw[oldedge]
    	 					(p1) .. controls +(1.15,1.72) and +(-1.15,1.72) .. (p7);
    	 					
    	 					% old lower edge 2--3
    	 					\draw[oldedge]
    	 					(p2) .. controls +(0.20,-0.42) and +(-0.20,-0.42) .. (p3);
    	 					
    	 					% old lower edge 1--6
    	 					\draw[oldedge]
    	 					(p1) .. controls +(1.10,-1.48) and +(-0.92,-1.48) .. (p6);
    	 					
    	 					% old lower edge 3--7
    	 					\draw[oldedge]
    	 					(p3) .. controls +(0.98,-1.18) and +(-0.98,-1.18) .. (p7);
    	 					
    	 					% new lower edge 5--8
    	 					\draw[newedge]
    	 					(p5) .. controls +(0.64,-0.98) and +(-0.64,-0.98) .. (p8);
    	 					
    	 					\drawstatevertices
    	 					\node[active] at (p8) {};
    	 					
    	 				\end{scope}
    	 				
    	 				\node[font=\small] at (0,-0.90)
    	 				{$i=8$};
    	 			\end{scope}
    	 			
    	 			%============================================================
    	 			% Arrows between figures
    	 			%============================================================
    	 			
    	 			% first row
    	 			\draw[flow]
    	 			($(B1.east)+(0.14,0)$)
    	 			--
    	 			($(B2.west)+(-0.14,0)$);
    	 			
    	 			\draw[flow]
    	 			($(B2.east)+(0.14,0)$)
    	 			--
    	 			($(B3.west)+(-0.14,0)$);
    	 			
    	 			% arrow before second row
    	 			\draw[flow]
    	 			($(B4.west)+(-0.75,0)$)
    	 			--
    	 			($(B4.west)+(-0.18,0)$);
    	 			
    	 			% second row
    	 			\draw[flow]
    	 			($(B4.east)+(0.14,0)$)
    	 			--
    	 			($(B5.west)+(-0.14,0)$);
    	 			
    	 			\draw[flow]
    	 			($(B5.east)+(0.14,0)$)
    	 			--
    	 			($(B6.west)+(-0.14,0)$);
    	 			
    	 			% arrow before third row
    	 			\draw[flow]
    	 			($(B7.west)+(-0.75,0)$)
    	 			--
    	 			($(B7.west)+(-0.18,0)$);
    	 			
    	 			% third row
    	 			\draw[flow]
    	 			($(B7.east)+(0.14,0)$)
    	 			--
    	 			($(B8.west)+(-0.14,0)$);
    	 			
    	 		\end{tikzpicture}%
    	 	}
    	 	
    	 	\caption{An illustration of the map $\Gamma$.}
    	 	\label{fig:Gamma}
    	 	
    	 \end{figure}

    	 \begin{theorem}\label{thm-Gamma}
    	 	For $n\geq 1$, the map $\Gamma$ induces a bijection between $\mathcal{L}_n$ and $\mathfrak{S}_n$. Moreover, for any Laguerre history $(M,w)\in\mathcal{L}_n$, where $M=m_1m_2\cdots m_n$ and $w=w_1w_2\cdots w_n$, the corresponding permutation $\pi=\Gamma(M,w)$ satisfies the following identites: 
    	 	\begin{equation}\label{eq-pcyc}
    	 		\pcyc(\pi)=\#\, \{k\mid (m_k, w_k)=(D, 0)\},
    	 	\end{equation}	 
    	 	and 
    	 	\begin{equation}\label{eq-Rec-Gamma}
    	 		\Rec(\pi)=\{k\mid (m_k, w_k)=(H_1, 0)\}| \cup\{k\mid (m_k, w_k)=(U, 0)\}.
    	 	\end{equation}
    	 	\end{theorem}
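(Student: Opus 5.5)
We will prove Theorem \ref{thm-Gamma} by comparing $\Gamma$ with $\Psi^{-1}_{FZ}$. The upper arcs of $\Gamma(M,w)$ come from Step 3, which is exactly the construction of the biword $\binom{f}{f'}$ in $\Psi^{-1}_{FZ}$. The lower arcs of type $H_2$ are chosen exactly as in the biword $\binom{g}{g'}$. Only the rule for $D$-steps differs.

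\textbf{Well-definedness.} We first check that $\Gamma$ produces a permutation. Step 3 is a valid choice because $F$, $F'$ and the weights are those used by $\Psi^{-1}_{FZ}$, so at each stage the number of admissible $j$ exceeds $w_i$. For Step 4, process $G$ from left to right and keep the following invariant. Before treating $i$, the arcs drawn so far form disjoint paths and cycles. The vertices of $G'\cap[1,i-1]$ not yet selected are exactly the initial points of the open paths that end at a vertex $\geq i$ still awaiting an incoming lower arc. Their number is $h_i(M)$; this is the standard height count, as in Lemma \ref{lem-Cor}.

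When $m_i=D$, vertex $i$ is the endpoint of exactly one such open path, whose start is the distinguished $j$. So $j$ exists and is unique. The remaining candidates number $h_i(M)-1$, so $w_i$ with $1\le w_i\le h_i(M)-1$ selects a well-defined $k$. Choosing $j$ closes a cycle; choosing $k$ merges two paths. In either case the invariant is preserved, since the heights of $M$ drop by one at a $D$-step.

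\textbf{Bijectivity.} We exhibit the inverse. Given $\pi$, set $M$ by the rule of $\Psi_{FZ}$ and take the weights of $U$, $H_1$, $H_2$ steps from $\Psi_{FZ}$. For a cycle peak $i$ (a $D$-step), scan the lower arcs in increasing order of $i$ and read off $\pi(i)$. If $\pi(i)$ is the start of the open path ending at $i$, set $w_i=0$. Otherwise set $w_i$ equal to the rank of $\pi(i)$ among the other unselected vertices of $G'$ below $i$. This reverses Step 4 step by step, so $\Gamma$ is a bijection $\mathcal{L}_n\to\mathfrak{S}_n$. Equivalently, $\Gamma\circ\Psi_{FZ}$ merely relabels the weights at $D$-steps by a level-wise bijection of $\{0,\dots,h_i(M)-1\}$.

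\textbf{Statistics.} Identity (\ref{eq-Rec-Gamma}) holds because the upper arcs and the weights on $U$ and $H_1$ steps agree with those of $\Psi^{-1}_{FZ}(M,w)$. Hence $\pi(i)$ is the same for every $i$ with $\pi(i)\ge i$, and so is $\nest_i$. Observation \ref{ob0} together with Lemma \ref{lem-PsiFZ} then gives the result.

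For (\ref{eq-pcyc}), note that each pure cycle contains its maximum, which is a cycle peak, i.e.\ a $D$-step. A cycle is closed precisely when the arc into its last remaining open endpoint is drawn, and this happens exactly at a $D$-step with $w_i=0$. The last arc drawn in a cycle is the lower arc at its maximal element. A $D$-step with $w_i>0$ instead joins two distinct paths and closes nothing. Fixed points are $H_1$ steps forming loops in Step 3 and are not pure. Therefore the number of pure cycles equals $\#\{k\mid (m_k,w_k)=(D,0)\}$.

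\textbf{Main obstacle.} The delicate step is the invariant in Step 4: identifying the unselected vertices of $G'$ below $i$ with open paths and counting them as $h_i(M)$. This needs care about how paths formed by upper arcs interleave with lower arcs drawn earlier. We would prove it by induction on $i$, distinguishing the four step types.
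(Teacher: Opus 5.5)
Your proposal is correct and follows essentially the paper's route. As in the paper, $M$ and the weights at non-$D$ steps are read off exactly as in $\Psi_{FZ}$, so $w_i=\nest_i(\pi)$ there, which gives (\ref{eq-Rec-Gamma}) via Observation~\ref{ob0}. The weight at a $D$-step is recovered by locating the start $j$ of the open path ending at $i$; your rank description is the paper's $w_i=\nest_i(\pi)+\mathbf{1}_{j>\pi(i)}$. Finally, (\ref{eq-pcyc}) follows because a cycle closes precisely at a $D$-step with $w_i=0$.

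There are two differences from the paper. First, you verify that Step 4 is well defined (existence and uniqueness of $j$, and the $h_i(M)-1$ remaining candidates), which the paper leaves implicit. Second, you obtain surjectivity by reversing the construction rather than from $|\mathcal{L}_n|=|\mathfrak{S}_n|$.

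One small fix: your invariant should be restricted to open paths that start below $i$, since unselected $G'$ vertices above $i$ also start open paths. What you actually need is that the path ending at $i$ starts below $i$. This holds because tracing back from $i$ uses only upper arcs, which move left, and lower arcs already drawn, whose sources are less than $i$.
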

    	 	\begin{proof}
    	 		First we aim to show that the map $\Gamma$ is an injection. We retain the notations from the definition of $\Gamma$. To prove the injectivity of $\Gamma$, it suffices to demonstrate how one may recover $(M,w)$ from $\pi$.
    	 		
    	 		By the construction of $\Gamma(M,w)$, one readily verifies that $\pi(i)\geq i$ for each $i\in F$ and $\pi(i)<i$ for eachy $i\in G$. Moreover, $\pi^{-1}(i)\leq i$ for each $i\in F'$ and $\pi^{-1}(i)>i$ for each $i\in G'$. This yields that
    	 	\begin{equation}\label{eq-steps}
    	 		m_i=
    	 		\begin{cases}
    	 			U & \text{if } \pi(i)>i<\pi^{-1}(i), \\
    	 			D & \text{if } \pi(i)<i>pi^{-1}(i), \\
    	 			H_1 & \text{if } \pi(i)\geq i\geq \pi^{-1}(i), \\
    	 			H_2 & \text{if } \pi(i)<i<\pi^{-1}(i),
    	 		\end{cases}
    	 	\end{equation}
    	 		Again from the construction of $\Gamma(M,w)$, one readily sees that $w_i=\nest_i(\pi)$ whenever $m_i\neq D$. If $m_i=D$, then $i\in G\cap F'$. In this case, we locate the element $j\in G'$ that has not yet been selected such that vertices $i$ and $j$ are joined by a path of arcs.
    	 		
    	 		If $w_i=0$, we draw an arc below the horizontal axis connecting vertices $i$ and $j$, and $i$ turns out to be the largest element of a cycle of $\pi$.
    	 		
    	 		 If $w_i>0$, select the $w_i$-th smallest element $k\in G'\setminus\{j\}$ satisfying that $k<i$ and $k$ has not yet been selected, then draw an arc below the horizontal axis connecting vertices $i$ and $k$. Clearly,  $i$ is not the largest element of a cycle of $\pi$. Moreover, it  is  plain to check that 
    	 		\[
    	 		w_i=\nest_i(\pi)+\mathbf{1}_{j>\pi(i)},
    	 		\]
    	 		where $\mathbf{1}_{A}=1$ if statement $A$ holds, and $\mathbf{1}_{A}=0$ otherwise.
    	 		
    	 		From the above analysis, we may recover $(M,w)\in \mathcal{L}_n$ with $M=m_1m_2\cdots m_n$ and $w=w_1w_2\cdots w_n$, from a permutation $\pi$ via the following procedure.
    	 		For each $i\in [n]$, let
    	 		\[
    	 		m_i=
    	 		\begin{cases}
    	 			U & \text{if } \pi(i)>i<\pi^{-1}(i), \\
    	 			D & \text{if } \pi(i)<i>\pi^{-1}(i), \\
    	 			H_1 & \text{if } \pi(i)\geq i\geq \pi^{-1}(i), \\
    	 			H_2 & \text{if } \pi(i)<i<\pi^{-1}(i),
    	 		\end{cases}
    	 		\]
    	 		and the weight $w_i$ is determined by the following rules:
    	 		\begin{itemize}
    	 			\item If $m_i=D$ and $i$ is the largest element of a cycle in $\pi$, then set $w_i=0$;
    	 			\item If $m_i=D$ and $i$ is not the largest element of a cycle in $\pi$, find the vertex $j$ such that $\pi^{-1}(j)>i>j$ and vertices $i$ and $j$ are joined by a path of arcs. Let
    	 			\[
    	 			w_i=\nest_i(\pi)+\mathbf{1}_{j>\pi(i)}.
    	 			\]
    	 			\item If $m_i\neq D$, set $w_i=\nest_i(\pi)$.
    	 		\end{itemize}
    	 		Therefore the map $\Gamma$ is injective. It remains to establish surjectivity of $\Gamma$. This follows from the equality $|\mathcal{L}_n|=|\mathfrak{S}_n|$.
    	 		
    	 		Now we proceed to prove (\ref{eq-pcyc}) and  (\ref{eq-Rec-Gamma}). By observation \ref{ob0},  it follows   that $i\in \Rec(\pi)$ if and only if $\pi(i)\geq i$ and $\nest_i(\pi)=0$. This implies that $i\in \Rec(\pi)$ if and only  $m_i=U$ or $H_1$ and $w_i=0$, completing the proof of (\ref{eq-Rec-Gamma}). 
    	 		
    	 		It is easily seen that if  $i$ is the largest element of a pure cycle  of $\pi$, then  $i$ must be   a cycle peak. This implies that $m_i=D$. Recall that we have concluded that in the case $m_i = D$, we have $w_i = 0$ if and only if $i$ is the largest element of a pure cycle in $\pi$.  This completes the proof of  (\ref{eq-pcyc}). 
    	\end{proof}
    	 
    	  \begin{theorem}\label{thm-Theta}
    	 	For $n\geq 1$, the map $\Theta:=\Gamma\circ \Psi_{FZ}$ is a bijection from  $\mathfrak{S}_n$   onto  itself such that for any permutation  $\pi\in \mathfrak{S}_n$, we have  
    	 	$$
    	 	(\des_2, \ear)\pi= (\des_2, \pcyc)\Theta(\pi).
    	 	$$
    	 \end{theorem}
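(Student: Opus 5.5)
Since $\Theta=\Gamma\circ\Psi_{FZ}$ is a composition of two bijections, the plan is to check bijectivity first and then transport each statistic through the intermediate Laguerre history. For bijectivity, $\Psi_{FZ}$ is the Foata--Zeilberger bijection from $\mathfrak{S}_n$ onto $\mathcal{L}_n$, with inverse described above. By Theorem \ref{thm-Gamma}, $\Gamma$ is a bijection from $\mathcal{L}_n$ onto $\mathfrak{S}_n$. Hence $\Theta$ is a bijection from $\mathfrak{S}_n$ onto itself.

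Fix $\pi\in\mathfrak{S}_n$ and set $(M,w)=\Psi_{FZ}(\pi)$ and $\sigma=\Gamma(M,w)=\Theta(\pi)$. For the second coordinate, equation \eqref{eq-ear-Mot} of Lemma \ref{lem-PsiFZ} gives
\[
\ear(\pi)=\#\,\{k\mid (m_k,w_k)=(D,0)\}.
\]
Equation \eqref{eq-pcyc} of Theorem \ref{thm-Gamma} says this same count equals $\pcyc(\sigma)$. Therefore $\ear(\pi)=\pcyc(\Theta(\pi))$.

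For the first coordinate, I would compare the record sets rather than $\des_2$ directly. By Lemma \ref{lem-PsiFZ},
\[
\Rec(\pi)=\{k\mid (m_k,w_k)=(H_1,0)\}\cup\{k\mid (m_k,w_k)=(U,0)\}.
\]
Equation \eqref{eq-Rec-Gamma} of Theorem \ref{thm-Gamma} gives the same description for $\Rec(\sigma)$, so $\Rec(\pi)=\Rec(\sigma)$. The last identity of Observation \ref{ob0} shows that $\Des_2$ is determined by $\Rec$ alone: $\Des_2(\tau)$ consists of the $i<n$ with $i\in\Rec(\tau)$ and $i+1\notin\Rec(\tau)$. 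Hence $\Des_2(\pi)=\Des_2(\sigma)$, and in particular $\des_2(\pi)=\des_2(\Theta(\pi))$. Combining the two coordinates gives $(\des_2,\ear)\pi=(\des_2,\pcyc)\Theta(\pi)$.

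All the substantive work is contained in Theorem \ref{thm-Gamma}, especially the characterization of when $w_i=0$ at a $D$ step, which is taken as given here. The only point needing care is that the step and weight encodings used in Lemma \ref{lem-PsiFZ} and in Theorem \ref{thm-Gamma} refer to the same $(M,w)$. This holds because both are stated in terms of the common pair $(M,w)\in\mathcal{L}_n$, so no compatibility issue arises.
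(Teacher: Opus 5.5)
Your proposal is correct and follows the paper's proof essentially step for step. Bijectivity comes from composing $\Psi_{FZ}$ with $\Gamma$, and $\ear(\pi)=\pcyc(\Theta(\pi))$ holds because both equal the number of $(D,0)$ steps. Finally, $\des_2$ is preserved because $\Rec(\pi)=\Rec(\Theta(\pi))$ and $\Des_2$ is determined by $\Rec$ via Observation~\ref{ob0}.
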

    	 \begin{proof}
    	 	In view of Theorem \ref{thm-Gamma}, the map $\Theta:=\Gamma\circ \Psi_{FZ}$ is  a bijection from  $\mathfrak{S}_n$ to itself. 
    	 	Given a permutation $\pi\in \mathfrak{S}_n$, let $(M,w)=\Psi_{FZ}(\pi)$ and $\sigma=\Gamma(M,w)$. 
    	 	 Invoking (\ref{eq-ear-Mot}), (\ref{eq-Rec-Mot}), (\ref{eq-pcyc}) and (\ref{eq-Rec-Gamma}), we deduce that
    	 	$$
    	 	\ear(\pi)=\#\, \{k\mid (m_k, w_k)=(D, 0)\}=\pcyc(\sigma)
    	 	$$
    	 	and 
    	 	\begin{equation}\label{eq-Rec-Theta}
    	 	\Rec(\pi)=\{k\mid (m_k, w_k)=(H_1, 0)\} \cup\{k\mid (m_k, w_k)=(U, 0)\}=\Rec(\sigma).
    	 	\end{equation}
    Combining Observation \ref{ob0} and (\ref{eq-Rec-Theta}), it follows that
    $\Des_2(\pi)=\Des_2(\sigma)=\Des_2(\Theta(\pi))$ and hence $\des_2(\pi)=\des_2(\Theta(\pi))$ as desired, completing the proof.
    	 	\end{proof}
    	 	
    	 	 Given a  Laguerre history  $(M,w)\in \mathcal{L}_n$ with $M=m_1m_2\ldots m_n$ and $w=w_1w_2\ldots w_n$, we can constrcut a Laguerre history  $\zeta(M,w)=(M,w')\in \mathcal{L}_n$  with $w'=w'_1w'_2\ldots w'_n$ by letting
    	 	$$
    	 	w'_i=	\begin{cases}
    	 		h_i(M)-1-w_i&\quad \text{if $m_i=D$ },\\
    	 		w_i&\quad \text{otherwise}.\\ 
    	 	\end{cases}
    	 	$$
    	 	Clearly, the map $\zeta$ is an involution on $\mathcal{L}_n$.

    	 	  \begin{theorem}\label{thm-Omega}
    	 		For $n\geq 1$, the map $\Omega:= \Psi^{-1}_{FZ}\circ\zeta\circ \Psi_{FZ}$ is an involution on  $\mathfrak{S}_n$ such that for any permutation  $\pi\in \mathfrak{S}_n$, we have  
    	 		$$
    	 		(\des_2, \ear)\pi= (\des_2, \pdrop)\Omega(\pi).
    	 		$$
    	 	\end{theorem}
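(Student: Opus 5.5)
The plan is to transport everything through $\Psi_{FZ}$ and read off the statistics from Lemma \ref{lem-PsiFZ}. Since $\Psi_{FZ}$ is a bijection from $\mathfrak{S}_n$ onto $\mathcal{L}_n$ and $\zeta$ is an involution on $\mathcal{L}_n$, the composite $\Omega=\Psi^{-1}_{FZ}\circ\zeta\circ \Psi_{FZ}$ is a bijection and satisfies $\Omega\circ\Omega=\Psi^{-1}_{FZ}\circ\zeta\circ\zeta\circ \Psi_{FZ}=\mathrm{id}$. So it is an involution, and only the statistic identities remain to be checked.

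Fix $\pi\in\mathfrak{S}_n$. Let $(M,w)=\Psi_{FZ}(\pi)$, $(M,w')=\zeta(M,w)$ and $\sigma=\Omega(\pi)=\Psi^{-1}_{FZ}(M,w')$, so that $\Psi_{FZ}(\sigma)=(M,w')$.

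First we check that $\zeta$ is well defined. When $m_i=D$ the admissible range of weights is $0\le w_i\le h_i(M)-1$. The map $w_i\mapsto h_i(M)-1-w_i$ is a bijection of this range, and it exchanges the value $0$ with the value $h_i(M)-1$. By \eqref{eq-pdrop-Mot} applied to $\sigma$ and \eqref{eq-ear-Mot} applied to $\pi$,
\[
\pdrop(\sigma)=\#\{k\mid (m_k,w'_k)=(D,h_k(M)-1)\}=\#\{k\mid (m_k,w_k)=(D,0)\}=\ear(\pi).
\]
Since $\zeta$ leaves the path $M$ and all weights at steps $U,H_1$ unchanged, \eqref{eq-Rec-Mot} gives
\[
\Rec(\sigma)=\{k\mid (m_k,w'_k)\in\{(H_1,0),(U,0)\}\}=\{k\mid (m_k,w_k)\in\{(H_1,0),(U,0)\}\}=\Rec(\pi).
\]
By the last identity of Observation \ref{ob0}, $\Des_2$ is determined by $\Rec$. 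Hence $\Des_2(\sigma)=\Des_2(\pi)$, and in particular $\des_2(\Omega(\pi))=\des_2(\pi)$. Combined with the display above, this gives $(\des_2,\ear)\pi=(\des_2,\pdrop)\Omega(\pi)$.

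The only step needing care is the justification of \eqref{eq-pdrop-Mot}, which the paper states as part of Lemma \ref{lem-PsiFZ}. If it had to be verified independently, I would proceed as follows. By Observation \ref{ob0}, a pure drop is exactly a cycle peak with $\cross_i=0$. A cycle peak is precisely a step with $m_i=D$, and for such a step $\pi(i)<i$. Lemma \ref{lem-Cor} then gives $\nest_i+\cross_i=h_i(M)-1$, so $\cross_i=0$ is equivalent to $w_i=\nest_i=h_i(M)-1$. Everything else is bookkeeping.
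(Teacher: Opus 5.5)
Your proposal is correct and follows the paper's argument. You conjugate the involution $\zeta$ by $\Psi_{FZ}$, read $\ear$, $\pdrop$ and $\Rec$ off the weighted path via Lemma~\ref{lem-PsiFZ}, and deduce $\Des_2(\pi)=\Des_2(\Omega(\pi))$ from Observation~\ref{ob0}; your extra check of the pure-drop identity via Lemma~\ref{lem-Cor} (cycle peaks are exactly the $D$-steps, and there $\cross_i=0$ iff $w_i=h_i(M)-1$) is sound and fills in a step the paper only asserts.
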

    	 	
    	 	\begin{proof}
    	 		Since   $\zeta$ is an involution and $\Psi_{FZ}$ is a bijection,  the map $\Omega:= \Psi^{-1}_{FZ}\circ\zeta\circ \Psi_{FZ}$ is  an involution on $\mathfrak{S}_n$.  Given a permutation $\pi\in \mathfrak{S}_n$, let $(M,w)=\Psi_{FZ}(\pi)$, $\zeta(M,w)=(M,w')$ and  $\sigma=\Psi^{-1}_{FZ}(M,w')$, where $M=m_1m_2\ldots m_n$, $w=w_1w_2\ldots w_n$  and $w'=w'_1w'_2\ldots w'_n$. 
    	 		Invoking (\ref{eq-ear-Mot})-(\ref{eq-Rec-Mot}),  we deduce that
    	 		$$
    	 		\ear(\pi)=\#\, \{k\mid (m_k, w_k)=(D, 0)\}= \#\, \{k\mid (m_k, w'_k)=(D, h_i(M)-1)\}=\pdrop(\sigma)
    	 		$$
    	 		and 
    	 		\begin{equation}\label{eq-Rec-Omega}
    	 			\begin{array}{lll}
    	 			\Rec(\pi)&=&\{k\mid (m_k, w_k)=(H_1, 0)\}\cup\{k\mid (m_k, w_k)=(U, 0)\}\\
    	 			&=&\{k\mid (m_k, w'_k)=(H_1, 0)\} \cup\{k\mid (m_k, w'_k)=(U, 0)\}=\Rec(\sigma).
    	 			\end{array}
    	 		\end{equation}
    	 		Combining Observation \ref{ob0} and (\ref{eq-Rec-Omega}), it follows that
    	 		$\Des_2(\pi)=\Des_2(\sigma)=\Des_2(\Omega(\pi))$ and hence $\des_2(\pi)=\des_2(\Omega(\pi))$ as desired, completing the proof.
    	 		\end{proof}

    	 	\noindent{\bf Proof of Theorem \ref{thm-equi}.} 
    	 		Combining Theorems \ref{thm-Theta} and \ref{thm-Omega}, we establish the equidistribution of the three bistatistics
    	 		$(\des_2, \ear)$, $(\des_2, \pdrop)$ and $(\des_2, \pcyc)$ on $\mathfrak{S}_n$. Combined with Theorem \ref{thm-Psi}, this yields the desired equidistribution, completing the proof. \qed

   \section*{Acknowledgments}  
   The work  was supported by
   the National Natural
   Science Foundation of China grants 12471318.

	%%%%%%%%%%%%%%%%%%%%%%%%%%%

\end{document}